\def\BState{\State\hskip-\ALG@thistlm}
\newtheoremstyle{dotless}{}{}{\itshape}{}{\bfseries}{}{ }{}
\theoremstyle{dotless}
\newtheorem{teor}{Theorem}[section]
\newtheorem{cor}[teor]{Corollary}
\newtheorem{lemma}[teor]{Lemma}
\newtheorem{Charac}[teor]{Characterization}
\newtheorem{defn}[teor]{Definition}
\newtheorem{cProbl}{Problem}
\newtheorem{oss}[teor]{Remark}
\newtheorem{ese}[teor]{Example}
\newcommand\scalemath[2]{\scalebox{#1}{\mbox{\ensuremath{\displaystyle #2}}}}
\def\N {{\mathbb N}}
\def\Z {{\mathbb Z}}
\def\b{\beta}
\def\sS{\mathcal{S}}
\def\shad{\textup{Shad}}
\def\supp{\textup{supp}}
\def\indeg{\textup{indeg}}
\def\corn{\textup{Corn}}
\def\slex{\textup{slex}}
\let\emptyset\varnothing
\newcommand{\boxalign}[2][0.4\textwidth]{
	\par\noindent\tikzstyle{mybox} = [draw=black,inner sep=7pt]
	\begin{center}\begin{tikzpicture}
		\node [mybox] (box){%
			\begin{minipage}{#1}{\vspace{-6.5mm}#2}\end{minipage}
		};
		\end{tikzpicture}\end{center}
}
\newcommand\blfootnote[1]{%
  \begingroup
  \renewcommand\thefootnote{}\footnote{#1}%
  \addtocounter{footnote}{-1}%
  \endgroup
}
\begin{document}
\title{\bf Upper bounds for Extremal Betti Numbers of $t$-Spread Strongly Stable Ideals}	
\author{Luca Amata, Antonino Ficarra, Marilena Crupi\\
	\ \\
	{\footnotesize Department of Mathematical and Computer Sciences, Physical and Earth Sciences}\\
	{\footnotesize University of Messina}\\
	{\footnotesize Viale Ferdinando Stagno d'Alcontres 31, 98166 Messina, Italy}\\
	{\footnotesize e-mail: lamata@unime.it, antficarra@unime.it, mcrupi@unime.it}
}
\maketitle

\begin{abstract}

We study the extremal Betti numbers of the class of $t$--spread strongly stable ideals. More precisely, we determine the maximal number of admissible extremal Betti numbers for such ideals, and thereby we generalize the known results for $t\in \{1,2\}$. 

\blfootnote{
\hspace{-0,3cm} \emph{Keywords:} monomial ideals, minimal graded resolution, extremal Betti numbers, $t$-spread ideals.

\emph{2020 Mathematics Subject Classification:} 05E40, 13B25, 13D02, 16W50, 68W30.

}
\end{abstract}

\section{Introduction}
Let $S=K[x_1,\dots,x_n]$ be the standard polynomial ring in $n$ variables with coefficients in a field $K$. 
A squarefree monomial ideal $I$ of $S$ is an ideal generated by squarefree monomials. This class of ideals plays an important role in Commutative Algebra, not only for its intrinsic value but overall for its strong connections to Combinatorics and Topology. Recently, Ene, Herzog, and Qureshi \cite{EHQ} have generalized the notion of (squarefree) monomial ideal by introducing the class of $t$--spread monomial ideals.

Let $t\ge0$ be an integer, a monomial $x_{i_1}x_{i_2}\cdots x_{i_d}$, with $1\le i_1\le i_2\le\dots\le i_d\le n$, is called $t$--spread if $i_{j+1}-i_j\ge t$ for all $j=1,\dots,d-1$. A $t$--spread monomial ideal is an ideal generated by $t$--spread monomials. In the last years, many authors have focused their attention on such a class of monomial ideals in order to analyze the main algebraic invariants that may be associated to a graded ideal in $S$ and thereby they have generalized some classical results \cite{AC7, AEL,CAC, EHQ, RD}. Indeed, a $0$--spread monomial ideal is a monomial ideal, whereas a $1$--spread ideal is a squarefree monomial ideal. 

Hence, one can guess that many results on classes of ordinary (squarefree) monomial ideals may continue to be true for  classes of $t$--spread monomial ideals.

Among all the algebraic invariants of a graded ideal $I$ of $S$, the role of the graded Betti numbers is unquestionable. An important subset of the graded Betti numbers of $I$ consists of the \emph{extremal Betti numbers} (Definition \ref{def:extr}) introduced in \cite{BCP} as a refinement of the Castelnuovo-Mumford regularity and the projective dimension of the ideal $I$. Many characterizations of these graded Betti numbers for classes of monomial ideals can be found in \cite{HSV, MC, MC3, CF4, AC, AC6, AC7} and in the references therein.

Our aim is to solve the following problem.

\begin{cProbl}\em  \label{probli} 
	Given two positive integers $t$ and $n$, let $\mathcal{S}_{t,n}$ be the set of all $t$--spread strongly stable ideals in $S=K[x_1,\dots,x_n]$. What is the maximal number of extremal Betti numbers allowed for an ideal in $\mathcal{S}_{t,n}$?
\end{cProbl}

In \cite{AC6}, the authors determined the maximal number of such graded Betti numbers for a $1$--spread strongly stable ideal, whereas in \cite{AC7}, the same authors computed the maximal number of extremal Betti numbers of a $2$--spread strongly stable ideal of initial degree $2$, where for the initial degree of a graded ideal $I$ of $S$, denoted by $\indeg I$, we mean the minimum $j$ such that $I_j\ne 0$ ($I_j$ is the $K$-vector space spanned by the homogeneous elements of $I$ of degree $j$). The problem of determining such number for $t>2$ is still open. In this paper, we are able to generalize the results in \cite{AC6, AC7} giving a positive answer to Problem \ref{probli}. We determine the maximal number of extremal Betti numbers of a $t$--spread strongly stable ideal of initial degree $\ge 2$, for all integers $t\ge 2$.

The plan of the paper is the following. Section \ref{sec2} contains some preliminary material that will be used through the paper. In Section \ref{sec4}, we construct a suitable set of $t$--spread monomials of $S$ (Subsections \ref{sub1}, \ref{sub2}) which allow us to solve Problem \ref{probli}. We provide some examples illustrating our techniques.
In Section \ref{sec5}, we prove our main result (Theorem \ref{main Teor}). It establishes what is the maximum number of extremal Betti numbers of a $t$--spread strongly stable ideal $I$ of $S$ of initial degree $2$.
As a consequence of this theorem we obtain the results stated in \cite{AC6} and \cite{AC7} for $t=1, 2$, respectively. In Section \ref{sec6}, an analogous result of Theorem \ref{main Teor} is stated (Theorem \ref{generalcase}). Such result is true when the initial degree of $t$--spread strongly stable ideal $I$ of $S$ is greater than 2. Finally, Section \ref{sec7} contains our conclusions and perspectives. Almost all the examples in the paper have been verified with specific packages of \emph{Macaulay2} \cite{GDS} some of which developed by the authors of the paper. 

\section{Preliminaries}
\label{sec2}

Let $S=K[x_1,\dots,x_n]$ be the standard polynomial ring in $n$ variables with coefficients in $K$. $S$ is an $\N$--graded ring where $\deg x_i=1$, for all $i=1,\dots,n$. A monomial ideal $I$ of $S$ is an ideal generated by monomials. By $G(I)$ we denote the unique minimal set of monomial generators of $I$. 
For a monomial $u\in S$, $u\ne 1$, we set
\[\supp(u)=\big\{i:x_i\ \text{divides}\ u\big\},\]
and write
\[\max(u)=\max\big\{i:i\in\supp(u)\big\},\,\,
\min(u)=\min\big\{i:i\in\supp(u)\big\}.\]
Moreover, we set $\max(1)=\min(1)=0$.
\begin{defn}
	\rm Let $t\ge0$ be an integer. A monomial $x_{i_1}x_{i_2}\cdots x_{i_d}$ with $1\le i_1\le i_2\le\dots\le i_d\le n$ is called $t$--spread, if $i_{j+1}-i_j\ge t$, for all $j=1,\dots,d-1$. A monomial ideal $I$ of $S$ is called a $t$--spread monomial ideal, if it is generated by $t$--spread monomials.
\end{defn}
\par\noindent
For instance, $x_1x_3x_6\in K[x_1,x_2,x_3,x_4,x_6]$ is a $2$--spread monomial, but not a $3$--spread monomial.  Every monomial is $0$--spread and every monomial ideal is a $0$--spread monomial ideal. A squarefree monomial is a $1$--spread monomial and a squarefree monomial ideal is a $1$--spread monomial ideal. If $t\ge 1$, then every $t$--spread monomial is a squarefree monomial.
\begin{defn}
	\rm A $t$--spread monomial ideal $I$ of $S$ is called $t$--spread stable, if for all $t$--spread monomials $u \in I$ and
	for all $i < \max(u)$ such that $x_i(u/x_{\max(u)})$ is a $t$--spread monomial, it follows that $x_i(u/x_{\max(u)})\in I$.
	The ideal $I$ is called $t$--spread strongly stable, if for all $t$--spread monomials $u\in I$, all $j\in\supp(u)$ and all
	$i < j$ such that $x_i(u/x_j)$ is $t$--spread, it follows that $x_i(u/x_j) \in I$.
\end{defn}

Let $u_1,\dots,u_r$ be $t$--spread monomials of $S$. The unique $t$--spread strongly stable ideal containing $u_1,\dots,u_r$ will be denoted by $B_t(u_1,\dots,u_r)$ \cite{EHQ}. The monomials $u_1,\dots,u_r$ are called $t$--spread Borel generators, and $B_t(u_1,\dots,u_r)$ is called the finitely generated $t$--spread Borel ideal.

Let us denote by $M_{n,d, t}$ the set of all $t$--spread monomials of degree $d$ in $S=K[x_1,\dots,x_n]$. From \cite[Theorem 2.3]{EHQ}, the cardinality of $M_{n,d,t}$ is given by
$$
|M_{n,d,t}|=\binom{n-(d-1)(t-1)}{d}.
$$

Let $t\ge 1$. We endow the set $M_{n,d,t}$ with the \emph{squarefree lexicographic order}, $\ge_{\slex}$ \cite{AHH2}. More precisely, let
$u=x_{i_1}x_{i_2}\cdots x_{i_d}$ and $v=x_{j_1}x_{j_2}\cdots x_{j_d}$,
be $t$--spread monomials of degree $d$, with $1\le i_1<i_2<\dots<i_d\le n$, $1\le j_1<j_2<\dots<j_d\le n$, then
$u>_{\slex}v$ if $i_1=j_1,\dots,i_{s-1}=j_{s-1}$ and $i_s<j_s$,
for some $1\le s\le d$. 

It's easy to verify that if $u$ is a $t$--spread monomial of $S$, then for all $v\in B_t(u)$ we have $v\ge_{\slex}u$.

Note that the existence of a $t$--spread monomial of degree $d$ in $S$ implies that $n\ge (d-1)t+1$. Indeed, the monomial
$x_1x_{1+t}x_{1+2t}\cdots x_{(d-1)t+1}$ is the greatest $t$--spread monomial of $M_{n,d,t}$, with respect to $>_{\slex}$.

Let $T$ be a not empty subset of $M_{n,d,t}$. We denote by $\max T$ ($\min T$, respectively) the maximal (minimum, respectively) monomial $w\in T$, with respect to $>_{\slex}$.

From now on, we assume that $M_{n,d,t}$ ($t\ge 1$) is endowed by the \emph{squarefree lexicographic order}.\\

Furthermore, we define the $t$--shadow of $T$
\begin{eqnarray*}
	\shad_t(T)&=&\Big\{x_iw\,:\,w\in T,i=1,\dots,n\Big\}\cap M_{n,d+1,t} \\
	&=& \big\{x_iw\,:\, w\in T\, \mbox{and $x_iw$ is $t$--spread monomial, $i=1,\dots,n$}\big\}.
\end{eqnarray*}

The set $\shad_t(T)$ could be empty. We define $\shad_t^1(T)=\shad_t(T)$ and $\shad_t^n(T)=\shad_t(\shad_t^{n-1}(T))$ for all $n\ge2$, by induction.
\medskip

If $I$ is a $t$--spread strongly stable ideal, then the graded Betti numbers of $I$ can be computed by  \cite[Corollary 1.12]{EHQ}
\begin{equation}
\label{eq1}
\b_{k,k+\ell}(I)=\sum_{u\in G(I)_\ell}\binom{\max(u)-t(\ell-1)-1}{k}.
\end{equation}
Such a formula returns the Eliahou--Kervaire formula \cite{EK} for (strongly) stable ideals  whenever $t=0$ and 
the Aramova--Herzog--Hibi formula \cite{AHH2, JT} for squarefree (strongly) stable ideals whenever $t=1$.

\begin{defn}\label{def:extr}
	\rm (\cite{BCP}) A graded Betti number $\b_{k,k+\ell}(I)\ne 0$ is called extremal if $\b_{i,i+j}(I)=0$ for all $i\ge k$, $j\ge\ell,(i,j)\ne(k,\ell)$.
	
	The pair $(k,\ell)$ is called a corner of $I$.
	
	If $(k_1,\ell_1),\dots,(k_r,\ell_r)$ ($n-1\ge k_1>k_2>\dots>k_r\ge1$, $1\le \ell_1<\ell_2<\dots<\ell_r$) are the corners of a graded ideal $I$ of $S$, the set
	$$
	\corn(I)=\Big\{(k_1,\ell_1),(k_2,\ell_2),\dots,(k_r,\ell_r)\Big\}
	$$
	is called the corner sequence of $I$ \cite{MC}; whereas the $r$-uple
	$$
	a(I)=\big(\b_{k_1,k_1+\ell_1}(I),\b_{k_2,k_2+\ell_2}(I),\dots,\b_{k_r,k_r+\ell_r}(I)\big)
	$$
	is called the corner values sequence of $I$ \cite{MC}.
\end{defn}
We conclude this section by quoting two results from \cite{AC7}.
\begin{Charac}
	\label{betti teor}
	\textup{(\cite[Theorem 1]{AC7})}
	Let $I$ be a $t$--spread strongly stable ideal of $S$. The following conditions are equivalent:
	\begin{enumerate}[label=\textup{(\alph*)}]
		\item $\b_{k,k+\ell}(I)$ is extremal;
		\item $k+t(\ell-1)+1=\max\{\max(u):u\in G(I)_\ell\}$ and $\max(u)<k+t(j-1)+1$, for all $j>\ell$ and for all $u\in G(I)_j$.
	\end{enumerate}
\end{Charac}
\begin{cor}
	\textup{(\cite[Corollary 2]{AC7})} Let $I$ be a $t$--spread strongly stable ideal of $S$ and let $\b_{k,k+\ell}(I)$ be an extremal Betti number of $I$. Then
	$$
	\b_{k,k+\ell}(I)=\Big|\Big\{ u\in G(I)_\ell:\max(u)=k+t(\ell-1)+1 \Big\}\Big|.
	$$
\end{cor}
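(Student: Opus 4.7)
The plan is to derive this directly from the Eliahou--Kervaire style formula \eqref{eq1} by inspecting which summands can be nonzero under the extremality hypothesis supplied by the preceding Characterization.

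First I would rewrite
$$
\b_{k,k+\ell}(I)=\sum_{u\in G(I)_\ell}\binom{\max(u)-t(\ell-1)-1}{k}
$$
and observe that the binomial coefficient $\binom{\max(u)-t(\ell-1)-1}{k}$ vanishes unless $\max(u)-t(\ell-1)-1\ge k$, i.e.\ unless $\max(u)\ge k+t(\ell-1)+1$. Therefore only generators $u\in G(I)_\ell$ satisfying $\max(u)\ge k+t(\ell-1)+1$ contribute to the sum.

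Next I would invoke condition (b) of the Characterization. Since $\b_{k,k+\ell}(I)$ is extremal, we have
$$
k+t(\ell-1)+1=\max\{\max(u):u\in G(I)_\ell\},
$$
so for every $u\in G(I)_\ell$ it holds that $\max(u)\le k+t(\ell-1)+1$. Combining this with the previous inequality, the only summands that survive are those with $\max(u)=k+t(\ell-1)+1$ exactly, and each of these contributes $\binom{k}{k}=1$.

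Putting these observations together yields
$$
\b_{k,k+\ell}(I)=\sum_{\substack{u\in G(I)_\ell\\ \max(u)=k+t(\ell-1)+1}}1=\big|\{u\in G(I)_\ell:\max(u)=k+t(\ell-1)+1\}\big|,
$$
which is the claim. There is no real obstacle here: the argument is a two-line bookkeeping consequence of \eqref{eq1} together with the max-condition on generators in degree $\ell$ provided by the Characterization; the information about degrees $j>\ell$ is not needed for this corollary (it is used only to certify that $\b_{k,k+\ell}(I)$ is in fact extremal rather than merely nonzero).
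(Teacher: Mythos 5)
Your proof is correct and is the standard derivation: the paper itself only quotes this corollary from \cite{AC7} without reproducing a proof, and your argument (only generators with $\max(u)\ge k+t(\ell-1)+1$ can contribute to \eqref{eq1}, while Characterization \ref{betti teor}(b) caps $\max(u)$ at exactly that value, leaving unit contributions $\binom{k}{k}=1$) is precisely how it is obtained. The only point worth noting is that $\max(u)-t(\ell-1)-1\ge 0$ holds automatically for any $t$--spread monomial of degree $\ell$, so the vanishing of the binomial coefficient for $\max(u)<k+t(\ell-1)+1$ is just the ordinary convention and needs no further care.
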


\section{Corners of $t$--Spread Strongly Stable Ideals of initial degree $2$}
\label{sec4}

In this Section, if $S=K[x_1,\dots,x_n]$, we manage some suitable $t$--spread monomials of $S$ in order to examine the behavior of the corners of a $t$--spread strongly stable ideal of $S$  of initial degree $2$.

Let us denote by $\sS_{t,n}$ the set of all $t$--spread strongly stable ideals in $S$ and by $\sS_{t,n,\bm{1}}$ the set of all  $I\in\sS_{t,n}$ for which the value of every extremal Betti number equals $1$, \emph{{\it i.e.}}, all the entries of the corner values sequence $a(I)$ are equal to $1$:
$$
\sS_{t,n,\bm{1}}=\Big\{ I\in\sS_{t,n}:a(I)=\bm{1}=(1,1,\dots,1) \Big\}.
$$

Our goal is to determine the greatest admissible number of corners for an ideal of initial degree $2$ in $\sS_{t,n,\bm{1}}$.

The starting point of our work has been the analysis of several examples ($t=2$, $3$, $4$, $5$) using the computer algebra system \emph{Macaulay2}. In each of these cases, we have fixed two positive integers $n$ and $\ell_1$, and using techniques similar to those in \cite{AC6} and \cite{AC7}, we have determined the maximum number of admissible corners of a $t$--spread strongly stable ideal $I$ of a polynomial ring in $n$ variables and such that $\indeg I=\ell_1$. Then, all the data obtained have been collected in some tables to analyze how the maximum number of corners varied with respect to the parameters (see for instance, Table~\ref{tab:1} and Table~\ref{tab:2}).

\begin{table}[H]
	\[
	\scalemath{0.9}{\begin{array}{|c|c|c|c|c|c|c|c|c|c|c|c|c|c|c|c|c|c|c|}
		\cline{3-19}
		\multicolumn{2}{c}{\multirow{2}{*}{$t=2$}} &\multicolumn{17}{|c|}{\textit{n}}\\
		\cline{3-19}
		\multicolumn{2}{c|}{} & \mathbf{4} & \mathbf{5} & \mathbf{6} & \mathbf{7} & \mathbf{8} & \mathbf{9} & \mathbf{10} & \mathbf{11} & \mathbf{12} & \mathbf{13} & \mathbf{14} & \mathbf{15} & \mathbf{16} & \mathbf{17} & \mathbf{18} & \mathbf{19} & \mathbf{20}\\
		\cline{1-19}
		\multirow{9}{*}{$\ell_1$}
		& \mathbf{2} & 1 & 1 & 2 & 2 & 2 & 3 & 3 & 4 & 4 & 5 & 5 & 6 & 6 & 7 & 7 & 8 & 8 \\
		\cline{2-19}
		& \mathbf{3} & - & - & 1 & 1 & 2 & 2 & 3 & 3 & 4 & 4 & 5 & 5 & 6 & 6 & 7 & 7 & 8 \\
		\cline{2-19}
		& \mathbf{4} & - & - & - & - & 1 & 1 & 2 & 2 & 3 & 3 & 4 & 4 & 5 & 5 & 6 & 6 & 7 \\
		\cline{2-19}
		& \mathbf{5} & - & - & - & - & - & - & 1 & 1 & 2 & 2 & 3 & 3 & 4 & 4 & 5 & 5 & 6 \\
		\cline{2-19}
		& \mathbf{6} & - & - & - & - & - & - & - & - & 1 & 1 & 2 & 2 & 3 & 3 & 4 & 4 & 5 \\  
		\cline{2-19}
		& \mathbf{7} & - & - & - & - & - & - & - & - & - & - & 1 & 1 & 2 & 2 & 3 & 3 & 4 \\
		\cline{2-19}
		& \mathbf{8} & - & - & - & - & - & - & - & - & - & - & - & - & 1 & 1 & 2 & 2 & 3 \\ 
		\cline{2-19}
		& \mathbf{9} & - & - & - & - & - & - & - & - & - & - & - & - & - & - & 1 & 1 & 2 \\   
		\cline{2-19}
		& \mathbf{10} & - & - & - & - & - & - & - & - & - & - & - & - & - & - & - & - & 1 \\     
		\cline{1-19}
		\end{array}}
	\]
	\caption{\label{tab:1} Maximum number of corners of 2--spread strongly stable ideals}
\end{table}

\begin{table}[H]
	\[
	\scalemath{0.9}{\begin{array}{|c|c|c|c|c|c|c|c|c|c|c|c|c|c|c|c|c|c|c|}
		\cline{3-19}
		\multicolumn{2}{c}{\multirow{2}{*}{$t=3$}} &\multicolumn{17}{|c|}{\textit{n}}\\
		\cline{3-19}
		\multicolumn{2}{c|}{} & \mathbf{4} & \mathbf{5} & \mathbf{6} & \mathbf{7} & \mathbf{8} & \mathbf{9} & \mathbf{10} & \mathbf{11} & \mathbf{12} & \mathbf{13} & \mathbf{14} & \mathbf{15} & \mathbf{16} & \mathbf{17} & \mathbf{18} & \mathbf{19} & \mathbf{20}\\
		\cline{1-19}
		\multirow{6}{*}{$\ell_1$}
		& \mathbf{2} & 1 & 1 & 1 & 1 & 2 & 2 & 2 & 2 & 3 & 3 & 3 & 4 & 4 & 4 & 5 & 5 & 5 \\
		\cline{2-19}
		& \mathbf{3} & - & - & - & - & 1 & 1 & 1 & 2 & 2 & 2 & 3 & 3 & 3 & 4 & 4 & 4 & 5 \\
		\cline{2-19}
		& \mathbf{4} & - & - & - & - & - & - & - & 1 & 1 & 1 & 2 & 2 & 2 & 3 & 3 & 3 & 4 \\
		\cline{2-19}
		& \mathbf{5} & - & - & - & - & - & - & - & - & - & - & 1 & 1 & 1 & 2 & 2 & 2 & 3 \\
		\cline{2-19}
		& \mathbf{6} & - & - & - & - & - & - & - & - & - & - & - & - & - & 1 & 1 & 1 & 2 \\  
		\cline{2-19}
		& \mathbf{7} & - & - & - & - & - & - & - & - & - & - & - & - & - & - & - & - & 1 \\   
		\cline{1-19}
		\end{array}}
	\]
	\caption{\label{tab:2} Maximum number of corners of 3-spread strongly stable ideals}
\end{table}
Given $\ell_1$ (the initial degree), the output of each table shows that the maximum number of admissible corners remains eventually unchanged for $t$ consecutive values of $n$ and then increases by 1. For this reason, we have estimated that it may be convenient to decompose $n$ with respect to $t$ by writing $n=d+kt$ for suitable positive integers $d$ and $k$.\\

For later use, recall that the floor function of a real number $x$ is defined as follows:
$$
\lfloor x\rfloor=\max\{n\in\Z:n\le x\}.
$$
In particular, if $-1\le x<0$, $\lfloor x\rfloor=-1$ and if $0\le x<1$, $\lfloor x\rfloor=0$.

\subsection{Methodology and preliminary results}\label{Disc}

Let $I\in \sS_{t,n,\bm{1}}$ such that $\indeg I =2$. We will verify that if one wants $I$ to have the maximal number of extremal Betti numbers, then 
\begin{enumerate}
	\item[-] $G(I)_2=B_t(x_1x_n)$, and 
	\item[-] $n=d+kt$ with $k\ge 3$ and $1\le d\le t$.
\end{enumerate}

\par\noindent
\textbf{Claim.} Set $\omega_0=x_1x_n$ and let $G(I)_2=B_t(x_1x_n)$.
There exist $t$--spread monomials $\omega_1,\dots,\omega_{k+\left\lfloor\frac{d-3}{t}\right\rfloor-1}$ such that 
$$
\omega_j:=\max\Big\{u\in M_{n,j+2,t}:u\notin \bigcup_{i=0}^{j-1}\shad_t^{j-i}(B_t(\omega_{i}))\ \text{and}\ \max(u)=n\Big\},
$$
for $j=1,\dots,k+\left\lfloor\frac{d-3}{t}\right\rfloor-1$.

One can observe that in order to determine $\omega_j$, it is sufficient to find the minimum $v$ of $\shad_t(B_t(\omega_{j-1}))$. Then $\omega_j$ will be the largest $t$--spread monomial of degree $j+2$ with $\max(\omega_j)=n$  following $v$ in the squarefree lexicographic order (see also \cite{AC6}).

\vspace{0,2cm}

In order to prove the {\bf Claim}, we need the next crucial lemma.
\begin{lemma}
	\label{maxbettilemma}
	Let $n, t$ be two positive integers. Let $u=x_{i_1}x_{i_2}\cdots x_{i_d}$, $1\le i_1<i_2<\dots<i_d\le n$ be a $t$--spread monomial of $S=K[x_1,\dots,x_n]$ such that $\max(u)=n$. If $i_{j+1}-i_j=t$, for all $j=1,\dots, d-1$ then $B_t(u)$ is the $t$--spread Veronese ideal of degree $d$.
	Otherwise, if $p=\max\{j:i_{j+1}-i_j>t\}$, then the largest $t$--spread monomial $v$ of degree $d$ of $S$ with $\max(v)=n$ following $u$, with respect to $>_{\slex}$, is
	$$
	v=x_{i_1}\cdots x_{i_{p-1}}x_{i_p+1}x_{i_p+1+t}\cdots x_{i_p+1+t(d-p-1)}x_{n}.
	$$
\end{lemma}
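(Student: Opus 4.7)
The plan is to treat the two cases separately. For the Veronese case where $i_{j+1}-i_j=t$ for all $j$, the monomial $u$ equals $x_{n-(d-1)t}x_{n-(d-2)t}\cdots x_n$. To see that $B_t(u)$ contains every $t$-spread monomial $w=x_{j_1}\cdots x_{j_d}$ of degree $d$, I would observe that $j_s+(d-s)t\le j_d\le n$ forces $j_s\le n-(d-s)t$, and then walk $u$ to $w$ by strongly stable moves applied left-to-right: at step $s$ one replaces the $s$-th factor $x_{n-(d-s)t}$ by $x_{j_s}$, with $t$-spreadness preserved because $j_s-j_{s-1}\ge t$ on the left and $n-(d-s-1)t-j_s\ge t$ on the right. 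This yields $G(B_t(u))=M_{n,d,t}$.

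For the main case, I would first extract the arithmetic forced by $p=\max\{j:i_{j+1}-i_j>t\}$: since $i_{j+1}-i_j=t$ for all $j>p$, one has $i_s=n-(d-s)t$ for $s\ge p+1$, and combined with $i_{p+1}\ge i_p+t+1$ this yields the crucial estimate
\[
n-i_p\;\ge\;1+(d-p)t.
\]
With this in hand I would verify that the proposed $v=x_{i_1}\cdots x_{i_{p-1}}x_{i_p+1}x_{i_p+1+t}\cdots x_{i_p+1+t(d-p-1)}x_n$ is a genuine $t$-spread monomial of degree $d$ with $\max(v)=n$: the gap from $i_{p-1}$ to $i_p+1$ is $\ge t+1$ (relevant only when $p\ge 2$), the internal gaps equal $t$ by construction, and the last gap $n-(i_p+1+t(d-p-1))$ is $\ge t$ by the estimate above. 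Comparing position-by-position with $u$ then shows $v<_{\slex} u$, since they agree on positions $1,\dots,p-1$ and at position $p$ the index of $v$ is $i_p+1>i_p$.

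Maximality is the heart of the argument. Given any $t$-spread $w=x_{j_1}\cdots x_{j_d}$ with $j_d=n$ and $w<_{\slex}u$, let $s$ be the first position where $j_s\ne i_s$; then $j_s>i_s$. The pivotal step is to rule out $s>p$: in that range $i_s=n-(d-s)t$, so $j_s>i_s$ together with $t$-spreadness would force $n=j_d\ge j_s+(d-s)t>n$, a contradiction. If $s<p$, then $v$ has the value $i_s$ at position $s$, strictly below $j_s$, whence $v>_{\slex}w$. If $s=p$, then $j_p\ge i_p+1$; strict inequality again gives $v>_{\slex}w$, while in case of equality a short forward induction on positions $p+1,\dots,d-1$, using $j_{p+r}\ge j_{p+r-1}+t$, shows $j_{p+r}\ge i_p+1+rt$, which is precisely the index of $v$ at position $p+r$, forcing either $v>_{\slex}w$ at the first strict inequality or $v=w$. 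The main obstacle I expect is organizational rather than conceptual: keeping the direction of the squarefree lex order straight, isolating the arithmetic of $p$, and cleanly handling the edge case $p=1$ where the prefix $x_{i_1}\cdots x_{i_{p-1}}$ is empty.
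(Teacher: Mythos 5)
Your proof is correct and follows essentially the same route as the paper: the pivotal step of ruling out a first disagreement at a position $s>p$ via the telescoping identity $i_d-i_s=t(d-s)$ together with $t$-spreadness of the competitor is exactly the paper's argument. You additionally write out the verifications (that $v$ is a genuine $t$-spread monomial with $\max(v)=n$, that $u>_{\slex}v$, and that $v\ge_{\slex}w$ for every competitor $w$) which the paper compresses into ``it is easy to verify that $v$ is the monomial we are looking for.''
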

\begin{proof}
	If $i_{j+1}-i_j=t$, for all $j=1,\dots,d-1$, then $u=x_{n-t(d-1)}\cdots x_{n-t}x_{n}$. Hence, $u$ is the smallest $t$--spread monomial of degree $d$ with $\max(u)=n$ and $B_t(u)$ is generated by all $t$--spread monomials of degree $d$ of $S$, \emph{{\it i.e.}}, $B_t(u)$ is the $t$--spread Veronese ideal of degree $d$ \cite{EHQ}.
	
	Now, suppose $i_{j+1}-i_j>t$ for some $j$ and let $p=\max\{j:i_{j+1}-i_j>t\}$.
	If $w=x_{s_1}x_{s_2}\cdots x_{s_d}$ is a $t$--spread monomial of degree $d$ with $\max(w)=n$ and $u>_{\slex}w$, then $i_1=s_1,\dots,i_{j-1}=s_{j-1}$ and $i_j<s_j$ for some index $j$. 
	
	It is $j\le p$. Indeed, if $j>p$, then $i_{p+2}-i_{p+1}=\dots=i_d-i_{d-1}=t$. Hence, $i_d-i_j=t(d-j)$, $i_d=s_d=n$, $s_{d}-s_j\ge t(d-j)$, and $i_j<s_j$. Thus,
	$$
	t(d-j)\le s_{d}-s_j=i_d-s_j<i_d-i_j=t(d-j),
	$$
	and so $t(d-j)<t(d-j)$. This is absurd. Hence, $j\le p$.
	
	Therefore, setting
	\begin{equation}\label{eq1:lem}
	v=x_{i_1}\cdots x_{i_{p-1}}x_{i_p+1}x_{i_p+1+t}\cdots x_{i_p+1+t(d-p-1)}x_{n},
	\end{equation}
	one has
	\[u>_{\slex}v.\]
	
	Moreover, it is easy to verify that $v$ is the monomial we are looking for.
\end{proof}
\vspace{0,3cm}

Lemma \ref{maxbettilemma} will play a key role in getting the $\omega_i$'s in the claim.

In order to simplify the notation, we set $$\displaystyle\Omega_j:=\Big\{u\in M_{n,j+2,t}:u\notin \bigcup_{i=0}^{j-1}\shad_t^{j-i}(B_t(\omega_{i}))\ \text{and}\ \max(u)=n\Big\}$$
and so $\omega_j:=\max\Omega_j$, for all $j$.\\

Next remark will be pivotal for the rest.
\begin{oss}\label{rem1}
	\rm Let $t\ge 2$, $n=d+kt$, $1\le d\le t$ and $k\in\{0,1,2\}$.
	\begin{enumerate}[label=\textup{(\roman*)}]
		\item If $k=0$, then there is no $t$--spread monomial of degree two. Indeed, in such a case $n=d < t+1$.
		\item If $k=1$, then $n=d+t$. Hence, $\omega_0=x_{1}x_{d+t}$ and $\shad_t(B_t(\omega_0))=\emptyset$.
		\item Let $k=2$, then $\omega_0=x_1x_n=x_1x_{d+2t}$. If $d=1$, then $\shad_t(B_t(\omega_0))=\{x_1x_{1+t}x_{1+2t}\}=M_{n,3,t}$, so $\Omega_1=\emptyset$. If $2\le d\le t$, then $\shad_t(B_t(\omega_0))=B_t(x_1x_{d+t}x_{d+2t})$. Hence, $\min\shad_t(B_t(\omega_0))=x_1x_{d+t}x_{d+2t}$, and by Lemma \ref{maxbettilemma}, $\omega_1 = \max \Omega_1 =x_2x_{2+t}x_{d+2t}$ is the largest $t$--spread monomial $u$ with $\max(u)=n$ following $x_1x_{d+t}x_{d+2t}$, with respect to $>_{\slex}$.
	\end{enumerate}
\end{oss}
\vspace{0,2cm}
\par\noindent
Remark \ref{rem1} points out that the decomposition 
\[n=d+kt,\,\, 1\le d\le t\] 
does not \emph{work well} in the sense of the {\bf Claim}, whenever $k\in\{0,1,2\}$. 

\subsubsection{Basic monomials of the first type}\label{sub1}

In this Subsection, if $S=K[x_1, \ldots, x_n]$, $n=d+kt$ ($k\ge 3$, $1\le d\le t$), setting, $\omega_0 = x_1x_n$, we construct a set of monomials $\omega_1,\dots,\omega_q$ of $S$, $q\le {k+\left\lfloor\frac{d-3}{t}\right\rfloor-1}$, of the type described in the {\bf Claim}. Such  monomials will be called \emph{basic monomials of the first type}.

Let $k\ge 3$. For the sake of clarity, we distinguish two cases.

{\bf Case 1.} Let $k=3$, $n=d+3t$. 

Set $\omega_0=x_1x_n$.
The minimum of $\shad_t(B_t(\omega_0))$ is $u=x_1x_{n-t}x_n=x_1x_{d+2t}x_{d+3t}$. By Lemma \ref{maxbettilemma}, the largest $t$--spread monomial $v$ of degree 3 with $\max(v)=n$ that follows $u$ in the squarefree lexicographic order is $v=x_2x_{2+t}x_{d+3t}$. It is clear that $v= \omega_1= \max \Omega_1$.

Let us discuss the ``distance'' between the last two variables of $\omega_1 = x_2x_{2+t}x_{d+3t}$. We need two consider some cases.

If $d=1$, then $1+3t-(2+t)=2t-1$, $\shad_t(B_t(\omega_1))=\{x_1x_{1+t}x_{1+2t}x_{1+3t}\}$ $=M_{1+3t,4,t}$ and $\omega_2$ does not exist. Hence, we have $k-1=2$ monomials, \emph{{\it i.e.}}, $\omega_0, \omega_1$.

If $d=2$, then $2+3t-(2+t)=2t$, $\shad_t(B_t(\omega_1))=B_t(x_2x_{2+t}x_{2+2t}x_{2+3t})=M_{n,4,t}$ and we cannot construct $\omega_2$. Hence, also in such a case, we have $k-1=2$ monomials.

Finally,  let $3\le d\le t$. In such a case $\shad_t(B_t(\omega_1))=B_t(x_2x_{2+t}x_{d+2t}x_{d+3t})$. Since $\min\shad_t(B_t(\omega_1))= x_2x_{2+t}x_{d+2t}x_{d+3t}$, then, by  Lemma \ref{maxbettilemma}, one has that $\omega_2=x_2x_{3+t}x_{3+2t}x_{d+3t}$. On the other hand, since $d+4-t\le 4$, we have
\begin{eqnarray*}
	|M_{n,5,t}|&=&|M_{d+3t,5,t}|
	=\binom{d+3t-4t+4}{5}=\binom{d+4-t}{5}=0.
\end{eqnarray*}

Thus, $M_{n,5,t}=\emptyset$ and $\omega_3$ does not exist. Hence, in such a case, we have constructed $k=3$ monomials.

\par\noindent
{\bf Case 2.} Let $k\ge 4$. Firstly, we consider an example.

\begin{ese}
	\rm
	Let $n=9$ and $t=2$, we can write $n=d+kt$, with $d=1,k=4$. Then, $\omega_{0}=x_1x_9$ and $\omega_1=x_2x_4x_9$. Observe that in such a case 
	$$
	\Omega_2:=\Big\{u\in M_{9,4,2}:u\notin \shad_2^2(B_2(\omega_{0}))\cup\shad_2(B_2(\omega_{1}))\ \text{and}\ \max(u)=9\Big\}\neq \emptyset.
	$$
	Indeed, $|M_{9,4,2}|=\binom{6}{4}=15$ and the monomials $u\in M_{9,4,2}$ with $\max(u)=9$ are the following ones
	\begin{center}
		$x_1x_3x_5x_9,\ x_1x_3x_6x_9,\ x_1x_3x_7x_9,\ x_1x_4x_6x_9,\ x_1x_4x_7x_9,\ x_1x_5x_7x_9,$\\
		$x_2x_4x_6x_9,\ x_2x_4x_7x_9,\ x_2x_5x_7x_9.$
	\end{center}
	Hence, $\Omega_2=\{x_2x_5x_7x_9\}$ and $\omega_2=x_2x_5x_7x_9$. 
	
	Note that $$\omega_2=x_{2}x_{2+t+1}x_{2+2t+1}x_n = x_{2}x_{3+t}x_{3+2t}x_n,$$
	as in the case $k=3$ for $3\le d\le t$.
\end{ese}\medskip

Assume $k\ge 4$. For $j\ge 1$, let us define the following monomials of $S$ of degree $j+2$
\begin{equation}
\label{eq:omega}
\begin{aligned}
\omega_{j}&:=\bigg(\prod_{i=0}^{j-1}x_{2+i+it}\bigg)x_{(j+1)+jt}x_{d+kt}\\
&=x_2x_{3+t}x_{4+2t}\cdots x_{(j+1)+(j-1)t}x_{(j+1)+jt}x_{d+kt}.
\end{aligned}
\end{equation}

For $j=1,2$, one has:
\begin{align*}
\omega_{1}&=x_2x_{2+t}x_{d+kt},\\\omega_2&=x_2x_{3+t}x_{3+2t}x_{d+kt}.
\end{align*}
It is clear that $\omega_1=\max\Omega_1$ and $\omega_2=\max\Omega_2$. 

The monomials $\omega_j$ are $t$--spread if $j$ satisfies the inequality $(j+1)+jt\le n-t$. 

Let us determine the greatest such an integer, \emph{{\it i.e.}},
$$
j_{\max}=\max\big\{j:(j+1)+jt\le n-t\big\}.
$$
For every $j\in\big\{j:(j+1)+jt\le n-t\big\}$ one has $j(1+t)\le n-t-1$. Therefore,
$$
j\le \frac{n-(1+t)}{1+t}=\frac{n}{1+t}-1,
$$
and
$$
j_{\max}=\left\lfloor\frac{n}{1+t}\right\rfloor-1.
$$

Now, we want to verify that $\omega_j= \max\Omega_i$, for $j =1, \ldots,  j_{\max}$.

Assume $\max\Omega_{j-1}=\omega_{j-1}$. Since,
\[\min\shad_t\big(B_t(\omega_{j-1})\big)=\omega_{j-1}x_{n-t}\
=\bigg(\prod_{i=0}^{j-2}x_{2+i+it}\bigg)x_{j+(j-1)t}x_{d+(k-1)t}x_{d+kt},\]
by Lemma \ref{maxbettilemma}, one has 
$$
\max\Omega_j=\bigg(\prod_{i=0}^{j-2}x_{2+i+it}\bigg)x_{(j+1)+(j-1)t}x_{(j+1)+jt}x_n = \omega_j.
$$

Note that (\ref{eq:omega}) describes also the $\omega_j$'s of the case $k=3$.\\

In the sequel, the monomials $\omega_j$ ($j=1,\dots,j_{\max}$) will be called \textit{basic monomials of the first type}, or also \textit{basic forward monomials}, because each monomial $\omega_j$ is obtained by changing the penultimate variable of the preceding monomial  ($\omega_{j-1}$) of the list, as next example illustrates.
\begin{ese}
	\label{es1mainteor}
	\rm Let $n=46$ and $t=3$, we can write $n=1+15t$. We determine $j_{\max}$.
	\begin{align*}
	j_{\max}&=\left\lfloor\frac{n}{1+t}\right\rfloor-1=\left\lfloor\frac{46}{4}\right\rfloor-1=10.
	\end{align*}
	Firstly, we set $\omega_0=x_1x_n=x_1x_{46}$. Then, we have the following $j_{\max}$ ``further monomials'': 
	\begin{align*}
	\omega_{j}:=\bigg(\prod_{i=0}^{j-1}x_{2+i+it}\bigg)x_{(j+1)+jt}x_{d+kt}=x_2x_{3+t}\cdots x_{(j+1)+(j-1)t}x_{(j+1)+jt}x_{d+kt},
	\end{align*}
	for all $j=1,\dots,10$. More in details,
	\vspace{0.2cm}
	\small
	\boxalign{\begin{align*}
		\omega_1&=\underline{x_2}\bm{x_5}x_{46}, &&&\omega_6&=x_2x_6x_{10}x_{14}x_{18}\underline{x_{22}}\bm{x_{25}}x_{46},\\
		\omega_2&=x_2\underline{x_6}\bm{x_9}x_{46},&&&\omega_7&=x_2x_6x_{10}x_{14}x_{18}x_{22}\underline{x_{26}}\bm{x_{29}}x_{46},\\
		\omega_3&=x_2x_6\underline{x_{10}}\bm{x_{13}}x_{46},&&&\omega_8&=x_2x_6x_{10}x_{14}x_{18}x_{22}x_{26}\underline{x_{30}}\bm{x_{33}}x_{46},\\
		\omega_4&=x_2x_6x_{10}\underline{x_{14}}\bm{x_{17}}x_{46},&&&\omega_9&=x_2x_6x_{10}x_{14}x_{18}x_{22}x_{26}x_{30}\underline{x_{34}}\bm{x_{37}}x_{46},\\
		\omega_5&=x_2x_6x_{10}x_{14}\underline{x_{18}}\bm{x_{21}}x_{46},&&&\omega_{10}&=x_2x_6x_{10}x_{14}x_{18}x_{22}x_{26}x_{30}x_{34}\underline{x_{38}}\bm{x_{41}}x_{46}.
		\end{align*}}
	\normalsize
	Note that every monomial $\omega_i$ of the list can be obtained by changing the second to last variable of the previous monomial $\omega_{i-1} =x_{q_1}\cdots x_{q_r}$ of the list by adding $1$ to the index of such a variable and inserting a new variable indexed by $q_{r-1}+1+t$.
	
	For example, $\omega_2=x_2x_6\bm{x_9}x_{46}$ and $\omega_3=x_2x_6\underline{x_{10}}x_{13}x_{46}$.\\
	
	Observe, that in this case we can construct another monomial of the kind described in the {\bf Claim}.
	Indeed $\Omega_{11}$ is not empty and it is easy to verify that
	$$
	\omega_{j_{\max}+1}=\omega_{11}=\max\Omega_{11}=x_2x_6x_{10}x_{14}x_{18}x_{22}x_{26}x_{31}x_{34}x_{37}x_{40}x_{43}x_{46}.
	$$
\end{ese}

\subsubsection{Basic monomials of the second type}\label{sub2}
Example \ref{es1mainteor} suggests us the construction of further $t$--spread monomials of $S$ which will be fundamental for our aim. Such monomials will be called \emph{basic monomials of the second type}.

Let us consider the $t$--spread monomial of degree $j_{\max}+2$
\[\omega_{j_{\max}}=x_{i_1}x_{i_2}\cdots x_{i_{(j_{\max}+2)}}\\
=\bigg(\prod_{i=0}^{j_{\max}-1}x_{2+i+it}\bigg)x_{(j_{\max}+1)+j_{\max}t}x_{d+kt}.\]
We observe that $i_{m+1}-i_m=t+1$ for all $m=1,\dots,j_{\max}-1$, and $i_{j_{\max+1}}-i_{j_{\max}}=t$. Moreover,
$$d+kt-[(j_{\max}+1)+j_{\max}t]=n-[(j_{\max}+1)+j_{\max}t]\le 2t.$$ Indeed, if $n-[(j_{\max}+1)+j_{\max}t]>2t$, then $n-t>(j_{\max}+1)+j_{\max}t+t$. Hence, $n-t\ge(j_{\max}+2)+(j_{\max}+1)t$ and $j_{\max}+1$ would be an integer greater than $j_{\max}$ which belongs to the set $\big\{j:(j+1)+jt\le n-t\big\}$. It is an absurd. Finally, $n-[(j_{\max}+1)+j_{\max}t]\le 2t$.

Now, let us examine the integer
\begin{align*}
s=2t-\Big[n-[(j_{\max}+1)+j_{\max}t]\Big]=2t-n+j_{\max}(1+t)+1.
\end{align*}
We need to distinguish two cases: $j_{\max}-1-s\ge 1$, $j_{\max}-1-s<1$.\\

Let $j_{\max}-1-s\ge 1$. In such a case,
\[	v=\min\shad_t(B_t(\omega_{j_{\max}}))=x_{i_1}x_{i_2}\cdots x_{i_{(j_{\max}-1-s)}}\bigg(\prod_{i=k-3-s}^kx_{d+it}\bigg).
\]
By Lemma \ref{maxbettilemma}, $\omega_{j_{\max}+1}$ does exist and it is the largest $t$--spread monomial $u$ of degree $j_{\max}+3$, with $\max(u)=n$ following $v$ in the squarefree lexicographic order:
\[\omega_{j_{\max}+1}= x_{i_{(j_{\max}-1-s)+1}}(v/x_{i_{(j_{\max}-1-s)}}).\]

Now we focus on the variable we are going to ``change'' in $v$ in order to obtain the monomial $\omega_{j_{\max}+1}$:
\begin{align*}
i_{(j_{\max}-1-s)}+1&=2+j_{\max}-2-s+(j_{\max}-2-s)t+1
\\&=2+j_{\max}-2-2t+n-j_{\max}(1+t)-1+(j_{\max}-2-s)t+1
\\&=d+(k-4-s)t.
\end{align*}
Therefore, from Lemma \ref{maxbettilemma} (see (\ref{eq1:lem})), the greatest monomial in $\Omega_{j_{\max}+1}=\Omega_{\left\lfloor\frac{n}{1+t}\right\rfloor}$ is 
$$
\omega_{j_{\max}+1}=\max\Omega_{j_{\max}+1} =  x_{i_1}x_{i_2}\cdots x_{i_{(j_{\max}-2-s)}}\bigg(\prod_{i=k-4-s}^kx_{d+it}\bigg).
$$
We call such monomial the \emph{critic monomial}, since from now on the next monomials we are going to construct are no longer obtained by changing the penultimate variable. 
\begin{ese}
	\rm Let us consider again Example \ref{es1mainteor}.
	In such a case, it is
	$$
	s=2t-\Big[n-[(j_{\max}+1)+j_{\max}t]\Big]=6-[46-41]=1.
	$$
	Since $j_{\max}-1-s\ge1$, then $\omega_{j_{\max}+1}$ exists. Setting $\omega_{j_{\max}}=\omega_{10}=x_{i_1}x_{i_2}\cdots x_{i_{12}}$, then $i_{(j_{\max}-1-s)}+1=i_{10-1-1}+1=i_8+1=31$ and the critic monomial $\omega_{j_{\max}+1}=\omega_{11}$ is the following one
	\begin{align*}
	\omega_{j_{\max}+1}&=x_{i_1}x_{i_2}\cdots x_{i_{(j_{\max}-2-s)}}\bigg(\prod_{i=k-4-s}^kx_{d+it}\bigg)\\&=x_2x_6x_{10}x_{14}x_{18}x_{22}x_{26}\boxed{x_{31}}\underline{x_{34}x_{37}x_{40}}\bm{x_{43}}x_{46}.
	\end{align*}
	Observe that $\omega_{j_{\max}+2}$ exists and $$\omega_{j_{\max}+2}=x_2x_6x_{10}x_{14}x_{19}x_{22}x_{25}x_{28}x_{31}x_{34}x_{37}x_{40}x_{43}x_{46}.$$
\end{ese}

Now, our question is: \emph{How may admissible $t$--spread monomials can we construct starting from $\omega_{j_{\max}+1}$?}

Consider the critic monomial
$$
\omega_{j_{\max}+1}=x_{i_1}x_{i_2}\cdots x_{i_{(j_{\max}-2-s)}}\bigg(\prod_{i=k-4-s}^kx_{d+it}\bigg).
$$
Since $i_{m+1}-i_m=t+1$ for all $m=1,\dots,j_{\max}-3-s$, from Lemma \ref{maxbettilemma} (see (\ref{eq1:lem})), we have
$$
\max\Omega_{j_{\max}+2} = \omega_{j_{\max}+2}=x_{i_1}x_{i_2}\cdots x_{i_{(j_{\max}-2-s-t-1)}}\bigg(\prod_{i=k-4-s-t-1}^kx_{d+it}\bigg).
$$

Proceeding in such a way, we can get the further $\omega_{j_{\max}+1+\nu}$ monomials,
$$
\omega_{j_{\max}+1+\nu}=x_{i_1}x_{i_2}\cdots x_{i_{(j_{\max}-2-s-\nu t)}}\bigg(\prod_{i=k-4-s-\nu (1+t)}^kx_{d+it}\bigg),
$$
as long as $i_{(j_{\max}-1-s-\nu t)}\ge i_1$, {\it i.e.} $j_{\max}-1-s-\nu t\ge 1$.

Now, let us determine
$$
\nu_{\max}=\max\big\{\nu:j_{\max}-1-s-\nu t\ge 1\big\}.
$$
If $\nu$ is such that $j_{\max}-1-s-\nu t\ge 1$, then $\nu t\le j_{\max}-2-s$. Hence
\begin{align*}
\nu t&\le j_{\max}-2-s=j_{\max}-2-2t+n-j_{\max}(1+t)-1
\\&=j_{\max}-2-2t+d+kt-j_{\max}-j_{\max}t-1
\\&=d-3+(k-2-j_{\max})t.
\end{align*}
Thus, we have
$$
\nu_{\max}=\left\lfloor\cfrac{d-3+\left(k-2-j_{\max}\right)t}{t}\right\rfloor=\left\lfloor\frac{d-3}{t}\right\rfloor+k-2-j_{\max}.
$$
So we can construct other $\nu_{\max}$ $t$--spread monomials $\omega_{j_{\max}+2},\dots,\omega_{j_{\max}+\nu_{\max}+1}$. 


Finally, we have constructed the following $t$--spread monomials of $S$:
\begin{enumerate}
	\item[-] $\omega_0=x_1x_n$ (one monomial);
	\item[-]  $\omega_1,\dots,\omega_{j_{\max}}$ ($j_{\max}$ basic monomials of the first type);
	\item[-]  $\omega_{j_{\max}+1},\omega_{j_{\max}+2},\dots,\omega_{j_{\max}+1+\nu_{\max}} $ ($\nu_{\max}+1$ monomials)
\end{enumerate}
which satisfy the {\bf Claim}. Their total number is
$$
1+j_{\max}+1+\nu_{\max}=1+j_{\max}+1+\left\lfloor\frac{d-3}{t}\right\rfloor+k-2-j_{\max}=k+\left\lfloor\frac{d-3}{t}\right\rfloor.
$$

The monomials $\omega_{j_{\max}+1},\omega_{j_{\max}+2},\dots,\omega_{j_{\max}+1+\nu_{\max}}$ will be called  \textit{basic monomials of second type}, or also \textit{basic backward monomials}, because each of these monomials is obtained, for all $\nu=0,\dots,\nu_{\max}$, by changing the $(j_{\max}-2-s-\nu t+1)-\textup{th}$ variable of the preceding monomial of the list.

Recall that we are considering $t\ge2$. We observe that
$$
k+\left\lfloor\frac{d-3}{t}\right\rfloor=\begin{cases}
k-1&\text{if}\ d=1,2,\\
\phantom{-}k&\text{if}\ 3\le d\le t.
\end{cases}
$$

Let us show that $\omega_{j_{\max}+\nu_{\max}+1}$ is the last monomial which satisfies the {\bf Claim}, {\it i.e.} $\Omega_{j_{\max}+\nu_{\max}+2}=\emptyset$. 

We need to examine some cases.

If $d=1$ or $d=2$, then if one may construct another monomial of the type described in the {\bf Claim}, its degree would be $j_{\max}+\nu_{\max}+4=k+\lfloor\frac{d-3}{t}\rfloor+2=k+1$ and
$$
|M_{n,k+1,t}|=\binom{n-(k+1-1)(t-1)}{k+1}=\binom{d+kt-kt+k}{k+1}=\binom{d+k}{k+1}.
$$

Hence, if $d=1$, one has $|M_{n,k+1,t}|=1$ and $M_{n,k+1,t}=\{x_{1}x_{1+t}\cdots x_{1+kt}\}$. Since, $x_{1}x_{1+t}\cdots x_{1+kt}$ $\in\shad_t^{k-1}(B_t(\omega_0))=\shad_t^{k-1}(B_t(x_1x_{1+kt}))$, then we have $\Omega_{j_{\max}+\nu_{\max}+2}=\emptyset$; whereas, if $d=2$, then $|M_{n,k+1,t}|=k+1$. Moreover, in such a case, $\max M_{n,k+1,t}= x_1x_{1+t}\cdots x_{2+kt}$ and $\min M_{n,k+1,t}=x_2x_{2+t}\cdots x_{2+kt}$.
Let $z\in M_{n,k+1,t}$ with $\max(z)=2+kt$. If $\min(z)=1$, then $z\in\shad_t^{k-1}\big(B_t(\omega_{0})\big)$; if $\min(z)=2$, then $z\in\shad_t^{k-2}\big(B_t(\omega_{1})\big)$. Therefore, $\Omega_{j_{\max}+\nu_{\max}+2}=\emptyset$.

Now, let $3\le d\le t$. If one could construct another monomial of the type described in the {\bf Claim}, then its degree would be equal to $j_{\max}+\nu_{\max}+4=k+\lfloor\frac{d-3}{t}\rfloor+2=k+2$ and
$$
|M_{n,k+2,t}|=\binom{n-(k+2-1)(t-1)}{k+2}=
\binom{k+1+d-t}{k+2}=0.
$$
In fact $k+1+d-t<k+2$. Hence $M_{n,j_{\max}+\nu_{\max}+4,t}=M_{n,k+2,t}=\emptyset$ and $\Omega_{j_{\max}+\nu_{\max}+2}=\emptyset$.

Hence, in every admissible case, $\Omega_{j_{\max}+\nu_{\max}+2}=\emptyset$ and consequently $\omega_{j_{\max}+\nu_{\max}+1}$ is the last $t$--spread monomial of the type described in the {\bf Claim} that one may construct. 

\begin{ese} \label{es:imp}
	\rm 
	We consider again Example \ref{es1mainteor}. In such case, $\nu_{\max}=\left\lfloor\frac{d-3}{t}\right\rfloor+k-2-j_{\max}=-1+15-2-10=2.$ There are $\nu_{\max}=2$
	monomials of the second type to determine. We set
	\begin{align*}
	\omega_{j_{\max}+1}&=x_{i_1}x_{i_2}\cdots x_{i_{(j_{\max}-2-s)}}\bigg(\prod_{i=k-4-s}^kx_{d+it}\bigg)\\&=x_2x_6x_{10}x_{14}x_{18}x_{22}x_{26}\boxed{x_{31}}\underline{x_{34}}\underline{x_{37}}\underline{x_{40}}\bm{x_{43}}x_{46}.
	\end{align*}
	We determine $\omega_{j_{\max}+2}$ by shifting backward by $t=3$, {\it i.e.}
	\begin{align*}
	\omega_{11}&=x_2x_6x_{10}x_{14}\stackon[1pt]{\boxed{x_{18}}x_{22}x_{26}\boxed{x_{31}}}{\Large \curvearrowleft \ \curvearrowleft\ \curvearrowleft} x_{34}x_{37}x_{40}x_{43}x_{46},\\
	&\phantom{x_2x_6x_{10}x_{14}\boxed{x_18}}\big\downarrow\\
	\omega_{12}&=x_2x_6x_{10}x_{14}\boxed{x_{19}}\underline{x_{22}x_{25}x_{28}}\bm{x_{31}}x_{34}x_{37}x_{40}x_{43}x_{46}.
	\end{align*}
	It remains to determine $\omega_{j_{\max}+\nu_{\max}+1}=\omega_{j_{\max}+3}=\omega_{13}$. Shifting backward by $t=3$ again, we have
	
	\begin{align*}
	\omega_{12}&=x_2\stackon[1pt]{\boxed{x_6}x_{10}x_{14}\boxed{x_{19}}}{\Large \curvearrowleft \ \curvearrowleft\ \curvearrowleft} x_{22}x_{25}x_{28}x_{31}x_{34}x_{37}x_{40}x_{43}x_{46},\\
	&\phantom{x_2\boxed{x_{11}}}\big\downarrow\\
	\omega_{13}&=x_2\boxed{x_7}\underline{x_{10}x_{13}x_{16}}\bm{x_{19}}x_{22}x_{25}x_{28}x_{31}x_{34}x_{37}x_{40}x_{43}x_{46}.
	\end{align*}
	Hence, we have obtained all the monomials we need.
\end{ese}\medskip	

It may happen that $\omega_{j_{\max}+1}$ does not exist, as next example shows.
\begin{ese}
	\label{esnotcrit}
	\rm Let $n=32$ and $t=5$, we can write $n=2+6t$. Then $j_{\max}=\left\lfloor\frac{n}{1+t}\right\rfloor-1=\left\lfloor\frac{32}{6}\right\rfloor-1=4$. In particular,
	$$
	\omega_{j_{\max}}=\omega_{4}=x_2x_8x_{14}x_{20}x_{25}x_{32}.
	$$
	Observe that $s=2t-\Big[n-[(j_{\max}+1)+j_{\max}t]\Big]=10-[32-25]=3$, and $j_{\max}-1-s=0<1$, therefore $\omega_{j_{\max}+1}$ does not exist. Since
	$$
	\nu_{\max}=\left\lfloor\frac{d-3}{t}\right\rfloor+k-2-j_{\max}=-1,
	$$
	then the total number of monomials constructed is $j_{\max}+1=5$. On the other hand, we can note that  $$j_{\max}+\nu_{\max}+2=k+\left\lfloor\frac{d-3}{t}\right\rfloor=6-1=5.$$ 
\end{ese}\bigskip

It is important to underline that in Example~\ref{esnotcrit}, even though $\omega_{j_{\max}+1}$ does not exist, the formula $k+\left\lfloor\frac{d-3}{t}\right\rfloor$ works well. Such a situation has forced us to analyze the case above.\\

Let $j_{\max}-1-s<1$. 

In such a case, $\omega_{j_{\max}}$ is the last monomial of the type described in the {\bf Claim} that we can construct, and consequently we get $j_{\max}+1=\left\lfloor\frac{n}{1+t}\right\rfloor$ monomials. 

We show that in this case $k+\left\lfloor\frac{d-3}{t}\right\rfloor=j_{\max}+1$. 

In fact, $j_{\max}<2+s=3+2t-n+j_{\max}+j_{\max}t$ and so $j_{\max}t>n-3-2t=(d-3)+(k-2)t$. Hence $j_{\max}>\left\lfloor\frac{d-3}{t}\right\rfloor+k-2$, so $j_{\max}\ge\left\lfloor\frac{d-3}{t}\right\rfloor+k-1$. Moreover,
$$
\nu_{\max}=\left\lfloor\frac{d-3}{t}\right\rfloor+k-2-j_{\max}\le \left\lfloor\frac{d-3}{t}\right\rfloor+k-2-\left( \left\lfloor\frac{d-3}{t}\right\rfloor+k-1\right)=-1.
$$
If we show that $\nu_{\max}=-1$, then we will have 
$$
k+\left\lfloor\frac{d-3}{t}\right\rfloor=j_{\max}+\nu_{\max}+2=j_{\max}+1=\left\lfloor\frac{n}{1+t}\right\rfloor,
$$
as desired. Indeed, if $\nu_{\max}\le -2$, then $\left\lfloor\frac{d-3}{t}\right\rfloor+k-j_{\max}\le0$, {\it i.e.}
\begin{align}
\label{eq cases}
k&\le j_{\max}-\left\lfloor\frac{d-3}{t}\right\rfloor=\left\lfloor\frac{n}{1+t}\right\rfloor-1-\left\lfloor\frac{d-3}{t}\right\rfloor.
\end{align}
Now, we need to consider two possible cases.

If $d=1$ or $d=2$, then $\left\lfloor\frac{d-3}{t}\right\rfloor=-1$, and $k\le 
\left\lfloor\frac{n}{1+t}\right\rfloor-1-\left\lfloor\frac{d-3}{t}\right\rfloor=\left\lfloor\frac{n}{1+t}\right\rfloor\le\frac{n}{1+t}$. Hence, $k(1+t)\le n=d+kt$ and consequently  $k\le d$; this is absurd since $k\ge 3$ and $d\le 2$.

If $3\le d\le t$, then $\left\lfloor\frac{d-3}{t}\right\rfloor=0$ and $k\le 
\left\lfloor\frac{n}{1+t}\right\rfloor-1-\left\lfloor\frac{d-3}{t}\right\rfloor=\left\lfloor\frac{n}{1+t}\right\rfloor-1\le\frac{n}{1+t}-1$. It follows that  $$k(1+t)\le n-1-t=d+kt-1-t.$$ Hence $k\le d-1-t$. But $d\le t$, so $k\le t-1-t=-1$. This is an absurd. Indeed, $k\ge 3$.

Thus, in each case we have $\nu_{\max}=-1$, as desired.

\section{The main result}\label{sec5}
By the materials in Section \ref{Disc}, we are able to state the main result in the paper.

\begin{teor}
	\label{main Teor}
	Let $n,t, k$ be three positive integers such that $n,t\ge 2$ and $k\ge 3$. Assume
	\[n=d+kt, \quad 1\le d\le t.\]
	Then, every ideal $I\in\sS_{t,n,\bm{1}}$ of initial degree two and with a corner in degree two can have at most
	$$
	k+\left\lfloor\frac{d-3}{t}\right\rfloor=\begin{cases}
	k-1&\text{if}\,\, d=1,2,\\
	k&\text{if}\,\, 3\le d\le t,
	\end{cases}
	$$
	corners.
\end{teor}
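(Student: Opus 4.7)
The plan is to combine Characterization \ref{betti teor} and its Corollary with the explicit construction of the basic monomials $\omega_j$ built in Subsections \ref{sub1}--\ref{sub2}. The guiding idea is: the value-$1$ condition at each corner forces a unique maximum-achieving generator; strong stability constrains these generators (slex-wise) to lie inside $\{\omega_0,\omega_1,\ldots,\omega_{j_{\max}+\nu_{\max}+1}\}$; and this family has cardinality exactly $k+\lfloor(d-3)/t\rfloor$.

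Let $I$ have corners $(k_1,\ell_1),\ldots,(k_r,\ell_r)$ with $\ell_1=2$. Characterization \ref{betti teor} and its Corollary yield, for each $i$, a unique $u_i\in G(I)_{\ell_i}$ with $\max(u_i)=m_i:=k_i+t(\ell_i-1)+1$. I would first pin down $u_1$: writing $u_1=x_ax_{m_1}$, strong stability produces $x_1x_{m_1}\in I$, a minimal generator because $\indeg I=2$, so uniqueness forces $a=1$ and $u_1=x_1x_{m_1}$. Since the target bound is monotone non-decreasing in $n$ (a quick check shows it jumps by one only when $d$ transitions from $2$ to $3$), I would reduce to the case $m_1=n$, yielding $u_1=x_1x_n=\omega_0$ and $B_t(\omega_0)\subseteq I$.

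Next I would proceed by induction on $i$: assuming $u_j\leq_{\slex}\omega_{\ell_j-2}$ for all $j<i$, the slex-antitonicity of Borel closures (namely, $u\leq_{\slex}v$ implies $B_t(u)\supseteq B_t(v)$, since every $t$-spread $w\geq_{\slex}u$ belongs to $B_t(u)$) gives $B_t(u_j)\supseteq B_t(\omega_{\ell_j-2})$ for $j<i$. Hence the cumulative non-minimal part of $I$ in degree $\ell_i$ contains $\bigcup_{h=0}^{\ell_i-3}B_t(\omega_h)_{\ell_i}$. Strong stability plus value $1$ at $(k_i,\ell_i)$ forces $u_i$ to be the slex-greatest $t$-spread monomial of degree $\ell_i$ with $\max=m_i$ outside the former union, hence outside the latter, so $u_i\leq_{\slex}\omega_{\ell_i-2}$ whenever $\omega_{\ell_i-2}$ exists.

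Finally, Subsection \ref{sub2} shows $\Omega_j=\emptyset$ for $j>j_{\max}+\nu_{\max}+1$; the admissible indices $\ell_i-2$ must therefore lie in $\{0,1,\ldots,j_{\max}+\nu_{\max}+1\}$, which has $k+\lfloor(d-3)/t\rfloor$ elements, giving $r\leq k+\lfloor(d-3)/t\rfloor$. The main obstacle is the inductive step just sketched: one must carefully justify that additional non-corner generators of $I$ at intermediate degrees $j\in(\ell_{i-1},\ell_i)$ only further enlarge the cumulative shadow (they cannot shrink it), and that the base reduction to $m_1=n$ is legitimate — a point requiring the monotonicity computation of the bound together with a bookkeeping showing that a smaller $m_1$ also tightens the subsequent constraints on the $m_i$.
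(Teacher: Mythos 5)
Your strategy for the upper bound --- forcing the corner-achieving generators $u_i$ of an arbitrary $I$ to lie slex-below the reference monomials $\omega_{\ell_i-2}$, so that the emptiness of $\Omega_j$ for $j>j_{\max}+\nu_{\max}+1$ caps the number of corners --- is more ambitious than the paper's proof, which only exhibits the ideal $B_t(\omega_0,\dots,\omega_{j_{\max}+1+\nu_{\max}})$ and appeals to the greedy construction of the $\omega_j$'s (and the termination argument $\Omega_{j_{\max}+\nu_{\max}+2}=\emptyset$) for maximality. Unfortunately your inductive step rests on a false lemma. The claimed ``slex-antitonicity of Borel closures,'' namely that $u\le_{\slex}v$ implies $B_t(u)\supseteq B_t(v)$, does not hold: the paper records only the one-way implication that every $w\in B_t(u)$ satisfies $w\ge_{\slex}u$, and the converse fails. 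For instance, take $t=1$, $n=6$, $u=x_2x_3x_6$ and $v=x_1x_4x_6$; then $u<_{\slex}v$ and both monomials have maximum equal to $n$, yet $v\notin B_1(u)$, since the degree-$3$ part of $B_1(x_2x_3x_6)$ consists of the $x_{j_1}x_{j_2}x_{j_3}$ with $j_1\le 2$, $j_2\le 3$, $j_3\le 6$, and $v$ has middle index $4>3$. Consequently you cannot pass from $u_j\le_{\slex}\omega_{\ell_j-2}$ to $B_t(u_j)\supseteq B_t(\omega_{\ell_j-2})$, and the containment of $\bigcup_h\shad_t^{\ell_i-2-h}\big(B_t(\omega_h)\big)$ in the non-minimal part of $I$ in degree $\ell_i$ --- the heart of your induction --- is unproved.

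Two further points that you flag as ``obstacles'' are genuinely load-bearing and not routine. First, the reduction to $m_1=n$ is not merely a monotonicity check on the bound: replacing $n$ by $m_1<n$ changes the decomposition $n=d+kt$, hence possibly which of the cases $d\in\{1,2\}$ or $3\le d\le t$ applies, and one must also verify that all subsequent generators of $I$ then live in the smaller variable range. Second, the sets $\Omega_j$ are defined using an $\omega_h$ in \emph{every} intermediate degree $h+2$, whereas your ideal $I$ may have no generator at all in some degree strictly between consecutive corner degrees; membership $u_i\in\Omega_{\ell_i-2}$ therefore also requires an argument for the indices $h$ that are not of the form $\ell_j-2$. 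As written, the proposal does not establish the upper bound, and the achievability half (that $k+\lfloor\frac{d-3}{t}\rfloor$ corners are actually attained), which is the part the paper proves in detail via the explicit ideal $B_t(\omega_0,\dots,\omega_{j_{\max}+1+\nu_{\max}})$ and Characterization \ref{betti teor}, is taken for granted.
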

\begin{proof}
	Let us consider the $k+\left\lfloor\frac{d-3}{t}\right\rfloor$ monomials of $S=K[x_1,\dots,x_n]$ defined in the {\bf Claim} and consider the $t$--strongly stable ideal 
	$$
	I=B_t\big(\omega_0,\omega_1,\dots,\omega_{j_{\max}},\omega_{j_{\max}+1},\omega_{j_{\max}+2},\dots,\omega_{j_{\max}+1+\nu_{\max}}\big).
	$$
	The construction of the monomials $\omega_j$, together with Characterization \ref{betti teor}, guarantees that $I$ is an ideal of $\sS_{t,n,\bm{1}}$ with a corner in degree two and such that
	$$
	|\corn(I)|=k+\left\lfloor\frac{d-3}{t}\right\rfloor=\begin{cases}
	k-1&\text{if}\ d=1,2,\\
	\phantom{-}k&\text{if}\ 3\le d\le t.
	\end{cases}
	$$
	More in details,
	\vspace{0.35cm}
	\begin{center}
		\scalebox{0.8}{$\begin{aligned}
			\corn(I)&=\bigg\{(k_i,\ell_i)\ :\ k_i=n-t(\ell_i-1)-1,\ \ell_i=2+(i-1),\ i=1,\dots,k+\left\lfloor\frac{d-3}{t}\right\rfloor \bigg\}\\
			&=\bigg\{ (n-t-1,2),(n-2t-1,3),\dots,\left(n-\Big(k+\left\lfloor\frac{d-3}{t}\right\rfloor\Big)t-1,k+\left\lfloor\frac{d-3}{t}\right\rfloor+1\right)\bigg\}.
			\end{aligned}$}
	\end{center}
	\vspace{0.35cm}
	It is clear that $|\corn(I)|$ is the maximum number of corners for a $t$--spread strongly stable ideal of $S$. 
\end{proof}\medskip

The results obtained in \cite{AC7} are now consequences of Theorem \ref{main Teor}.

\begin{cor}
	\textup{(\cite[Theorem 2]{AC7})} Let $n\ge 11$ be odd. A $2$--spread strongly stable ideal $I$ of $S$ of initial degree two and with a corner in degree two can have at most $\frac{n-3}{2}$ corners.
\end{cor}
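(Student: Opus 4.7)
The plan is to deduce the corollary as a direct specialization of Theorem \ref{main Teor} to the case $t=2$. First, I would write the decomposition $n=d+kt=d+2k$ required by the theorem, with $1\le d\le t=2$. Since $n$ is odd and $2k$ is even, the parity of $d$ must match that of $n$, forcing $d=1$ (the only odd value in $\{1,2\}$). This gives $k=(n-1)/2$.

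Next, I would check that the hypotheses of Theorem \ref{main Teor} are met: we need $t\ge 2$ (satisfied with $t=2$) and $k\ge 3$. From $n\ge 11$ we get $k=(n-1)/2\ge 5\ge 3$, so the theorem applies.

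Finally, I would apply the formula from Theorem \ref{main Teor}. Since $d=1$ falls into the case $d\in\{1,2\}$, the maximum number of corners equals
\[
k+\left\lfloor\frac{d-3}{t}\right\rfloor=k-1=\frac{n-1}{2}-1=\frac{n-3}{2},
\]
which is exactly the bound claimed in the corollary. No real obstacle is expected; the entire argument is a short parity observation followed by substitution into the formula of Theorem \ref{main Teor}, so the only thing to double-check is that $k\ge 3$ under the hypothesis $n\ge 11$, which is immediate.
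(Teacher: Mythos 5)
Your proposal is correct and matches the paper's proof exactly: the paper also writes $n=d+kt=1+2k$ with $d=1$, $t=2$, $k\ge 5$, and reads off $k-1=\frac{n-3}{2}$ from Theorem \ref{main Teor}. Your parity argument for why $d=1$ is forced is a helpful elaboration the paper leaves implicit.
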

\begin{proof} 
	It is sufficient to write $n=d+kt=1+2k$, with $d=1,t=2$ and $k\ge 5$. 
\end{proof}\medskip
\begin{cor}
	\textup{(\cite[Theorem 4]{AC7}).} Let $n\ge 14$ be even. A $2$--spread strongly stable ideal $I$ of $S$ of initial degree two and with a corner in degree two can have at most $\frac{n-4}{2}$ corners.
\end{cor}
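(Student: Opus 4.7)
The plan is an immediate specialization of Theorem \ref{main Teor} to the parameters $t=2$ with $n$ even, so almost all the work is matching notation. The first step is to put $n$ into the canonical form $n=d+kt$ required by the main theorem. With $t=2$, the allowed values for $d$ are only $d\in\{1,2\}$, and since $2k$ is always even, the parity of $d$ must equal the parity of $n$. Because $n$ is even, this forces $d=2$, and consequently $k=(n-2)/2$.

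The second step is to verify that the hypotheses of Theorem \ref{main Teor} are satisfied. One has $t=2\ge 2$ by construction, and the standing assumption $n\ge 14$ translates into
\[
k=\frac{n-2}{2}\ge 6\ge 3,
\]
so the condition $k\ge 3$ is met. The decomposition $n=d+kt$ with $1\le d\le t$ is then the one just exhibited, so Theorem \ref{main Teor} applies.

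Finally, since $d=2$ lies in the first branch of the piecewise formula of Theorem \ref{main Teor}, the maximum number of corners of any ideal $I\in\sS_{2,n,\bm{1}}$ of initial degree two with a corner in degree two equals $k-1$. Substituting the value of $k$ gives
\[
k-1=\frac{n-2}{2}-1=\frac{n-4}{2},
\]
which is the asserted bound. There is no real obstacle: the entire corollary is a parameter-matching consequence of the main theorem, with the parity of $n$ being the only nontrivial observation (it rules out the alternative branch $d=1$, which would have yielded the odd-case value $(n-3)/2$ instead).
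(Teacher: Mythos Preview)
Your proof is correct and follows the same approach as the paper: both simply specialize Theorem~\ref{main Teor} to $t=2$, $d=2$, $k=(n-2)/2\ge 6$, landing in the $d\in\{1,2\}$ branch to obtain $k-1=\frac{n-4}{2}$. Your additional remark that parity forces $d=2$ (ruling out $d=1$) is a nice clarification that the paper leaves implicit.
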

\begin{proof}
	It is sufficient to write $n=d+kt=2+2k$, with $d=2,t=2$ and $k\ge 6$.
\end{proof}\bigskip

From Characterization \ref{betti teor} and Theorem \ref{main Teor}, next result follows.
\begin{teor}
	\label{carcor}
	Let $n=d+kt$ be a positive integer, with $t\ge 2,\ 1\le d\le t$ and $k\ge 3$. Set $\ell_1=2$. Given $r=k+\left\lfloor\frac{d-3}{t}\right\rfloor$ pairs of positive integers
	\begin{equation}
	\label{eq2}
	(k_1,\ell_1),\ (k_2,\ell_2),\ \dots,\ (k_{r},\ell_{r}),
	\end{equation}
	with $1\le k_{r}<k_{r-1}<\dots<k_1\le n-t-1$ and $2=\ell_1<\ell_2<\dots<\ell_{r}\le k+\left\lfloor\frac{d-3}{t}\right\rfloor+1$, then there exists a $t$--spread strongly stable ideal of $S=K[x_1,\dots,x_n]$ of initial degree $\ell_1=2$ and with the pairs in \textup{(\ref{eq2})} as corners if and only if $k_j+t(\ell_j-1)+1=n$, for all $j=1,\dots,k+\left\lfloor\frac{d-3}{t}\right\rfloor$.
\end{teor}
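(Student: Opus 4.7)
The plan is to derive both implications of the biconditional from Characterization~\ref{betti teor} and Theorem~\ref{main Teor}. A common preliminary step in either direction: the $r = k + \lfloor(d-3)/t\rfloor$ strictly increasing integers $2 = \ell_1 < \cdots < \ell_r$ lying in $\{2, \ldots, r+1\}$ force $\ell_j = j + 1$ by pigeonhole, so the equation $k_j + t(\ell_j - 1) + 1 = n$ is equivalent to $k_j = n - tj - 1$.

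For sufficiency ($\Leftarrow$), I assume $k_j = n - tj - 1$ for all $j$. The ideal
\[
I = B_t\bigl(\omega_0, \omega_1, \ldots, \omega_{j_{\max}}, \omega_{j_{\max}+1}, \ldots, \omega_{j_{\max}+1+\nu_{\max}}\bigr),
\]
built from the basic forward and backward monomials of Subsections~\ref{sub1}--\ref{sub2}, is exactly the one constructed in the proof of Theorem~\ref{main Teor}: it is $t$--spread strongly stable, lies in $\sS_{t,n,\bm 1}$, has initial degree~$2$, and its corner sequence is $\{(n - tj - 1,\, j+1) : j = 1, \ldots, r\}$. This matches the prescribed pairs and thereby establishes existence.

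For necessity ($\Rightarrow$), let $I$ be any $t$--spread strongly stable ideal of initial degree~$2$ whose corner sequence is exactly the $r$ pairs in (\ref{eq2}). Characterization~\ref{betti teor} yields
\[
k_j + t(\ell_j - 1) + 1 \;=\; \max\bigl\{\max(u) : u \in G(I)_{\ell_j}\bigr\} \;\le\; n
\]
for each $j$. To promote ``$\le$'' to ``$=$'' I argue by contradiction: suppose $s_{j^*} := k_{j^*} + t(\ell_{j^*} - 1) + 1 < n$ at some smallest index $j^*$. Then no generator of $I$ in degree $\ell_{j^*}$ involves $x_n$, and combining this with the shadow/forward--backward analysis of Section~\ref{sec4} — in particular Lemma~\ref{maxbettilemma} applied to the smallest element of $\shad_t(B_t(\omega_{j^*-1}))$ — shows that the subsequent corners $(k_{j^*+1}, \ell_{j^*+1}), \ldots, (k_r, \ell_r)$ cannot all be realized, giving $|\corn(I)| < r$ and contradicting the hypothesis.

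The hard step is exactly this upgrade in the necessity direction. Its qualitative content is clear: a non-tight corner deprives later degrees of the ``top slot'' $x_n$, and the shadow/max calculus of Section~\ref{sec4} is what converts this loss into a strict deficit in the corner count. Making it quantitative requires an induction on $j$ that mirrors the recursive definition $\omega_j = \max \Omega_j$ of Subsections~\ref{sub1}--\ref{sub2}, verifying that the construction of Theorem~\ref{main Teor} is the unique packing realizing the maximum count~$r$.
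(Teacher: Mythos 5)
Your pigeonhole reduction (forcing $\ell_j=j+1$, so that the stated condition reads $k_j=n-tj-1$) and your sufficiency argument via the ideal of Theorem~\ref{main Teor} are both correct; note also that the paper gives no proof of Theorem~\ref{carcor} at all (it is merely asserted to ``follow'' from Characterization~\ref{betti teor} and Theorem~\ref{main Teor}), so the whole burden sits on your necessity direction --- and that is exactly where your proposal has a genuine gap. The mechanism you describe, namely that a non-tight corner at degree $\ell_{j^*}$ ``deprives later degrees of the top slot $x_n$'' and thereby forces $|\corn(I)|<r$, does not work for $t\ge 2$. Writing $m_{j}=k_{j}+t(\ell_{j}-1)+1=\max\{\max(u):u\in G(I)_{\ell_j}\}$, condition (b) of Characterization~\ref{betti teor} for the corner $(k_{j^*},\ell_{j^*})$ only bounds $\max(u)<m_{j^*}+t(\iota-\ell_{j^*})$ for $u\in G(I)_\iota$, $\iota>\ell_{j^*}$; when $t\ge 2$ this bound exceeds $n$ already for $\iota=\ell_{j^*}+1$ and $m_{j^*}=n-1$, so generators with $\max(u)=n$ may perfectly well reappear one degree later and all subsequent corners can still be realized. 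The monotonicity $m_{j+1}\le m_j$ that makes this kind of argument work in the squarefree case $t=1$ of \cite{AC6} is lost as soon as $t\ge 2$, and your sketch never supplies a substitute; the ``hard step'' is deferred to an unspecified induction.

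Worse, the gap cannot be closed, because the necessity direction fails as stated. Take $t=2$ and $n=9=1+4\cdot 2$, so $d=1$, $k=4$, $r=3$, and let $I=B_2(x_1x_8,\,x_2x_4x_9,\,x_2x_5x_7x_9)$. A direct check gives $G(I)_2=\{x_1x_b:3\le b\le 8\}$, $G(I)_3=\{x_2x_4x_b:6\le b\le 9\}$, $G(I)_4=\{x_2x_5x_7x_9\}$ and $G(I)_j=\emptyset$ for $j\ge 5$; by Characterization~\ref{betti teor} the corner sequence of $I$ is exactly $\{(5,2),(4,3),(2,4)\}$, all three corner values equal $1$, $I$ has initial degree $2$ and a corner in degree $2$, and the three pairs satisfy every hypothesis of the theorem. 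Yet $k_1+t(\ell_1-1)+1=8\ne 9=n$. (Here $m_1=8<n$ but $m_2=m_3=9$, which is precisely the behaviour your argument assumes impossible.) So the ``only if'' implication cannot be proved by your route or any other; the statement needs a stronger hypothesis (for instance prescribing the first corner to be $(n-t-1,2)$, as in the construction of Theorem~\ref{main Teor}) or a weaker conclusion. Your write-up should identify this obstruction explicitly rather than describe the missing induction as routine.
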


We finish this Section with an example which illustrates our methods.

\begin{ese}
	\rm Let $n=14$ and $t=3$, we can write $n=2+4t$. We determine $j_{\max}$ and $\nu_{\max}$.
	\begin{align*}
	j_{\max}&=\left\lfloor\frac{n}{1+t}\right\rfloor-1=\left\lfloor\frac{14}{4}\right\rfloor-1=2,\\
	\nu_{\max}&=\left\lfloor\frac{d-3}{t}\right\rfloor+k-2-j_{\max}=-1.
	\end{align*}
	Since $\nu_{\max}=-1$, then the critic monomial does not exist. Setting,  $\omega_{0}=x_1x_n=x_1x_{14}$, then, we have two \emph{forward monomials}
	\begin{align*}
		\omega_1&=\underline{x_2}\bm{x_5}x_{14},\\
		\omega_2&=x_2\underline{x_6}\bm{x_9}x_{14}.
		\end{align*}
	Hence 
	\begin{align*}
	I=&B_3\big(x_1x_{14},x_2x_4x_{14},x_2x_5x_7x_{14}\big)\\
	=&\big(x_1x_4,x_1x_5,x_1x_6,x_1x_7,x_1x_8,x_1x_9,x_1x_{10},x_1x_{11},x_1x_{12},x_1x_{13},\bm{x_1x_{14}},
	\\&x_2x_5x_8,x_2x_5x_9,x_2x_5x_{10},x_2x_5x_{11},x_2x_5x_{12},x_2x_5x_{13},\bm{x_2x_5x_{14}},
	\\&x_2x_6x_{9}x_{12},x_2x_6x_{9}x_{13},\bm{x_2x_6x_9x_{14}}\big)
	\end{align*}
	is the $3$--spread strongly stable ideal we are looking for. The highlithed monomials are the $3$--spread Borel generators of $I$. The Betti diagram of $I$ is
	
	\begin{center}
		\begin{tabular}{ccccccccccccc}
			&&0&1&2&3&4&5&6&7&8&9&10\\ \hline
			2&:&11&55&165&330&462&462&330&165&55&11&1\\
			3&:&7&28&56&70&56&28&8&1&-&-&-\\
			4&:&3&9&10&5&1&-&-&-&-&-&-\\
		\end{tabular}
	\end{center}	
\end{ese}

\section{The general initial degree case}
\label{sec6}
Theorem \ref{main Teor} gives the maximal number of corners allowed for a $t$--spread strongly stable ideal whenever the initial degree of the ideal is two.

Nevertheless, it is worthy to see how this number changes with respect to the initial degree of the given $t$--spread strongly stable ideal $I$.

In this Section, if $I$ is $t$--spread strongly stable ideal, we focus on $\indeg I= \ell_1\ge 3$ pointing out the differences with the case $\ell_1=2$ (Theorem~\ref{main Teor}).
\begin{teor}
	\label{generalcase}
	Let $n,t, k$ be three positive integers such that $n,t\ge 2$ and $k\ge 3$. Assume
	\[n=d+kt, \quad 1\le d\le t.\]
	Then, every ideal $I\in\sS_{t,n,\bm{1}}$ of initial degree $\ell_1$, $3\le\ell_1\le k+\left\lfloor\frac{d-2}{t}\right\rfloor+1$, and with a corner in degree $\ell_1$ can have at most
	$$
	k+\left\lfloor\frac{d-2}{t}\right\rfloor-(\ell_1-2)
	$$
	corners.
\end{teor}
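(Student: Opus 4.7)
The plan is to mirror the strategy used in the proof of Theorem \ref{main Teor}, adjusting the starting monomial to accommodate initial degree $\ell_1\ge 3$. In place of $\omega_0=x_1x_n$, I would set
$$\omega_0 = x_1 x_{1+t} x_{1+2t} \cdots x_{1+(\ell_1-2)t} x_n,$$
the $\slex$-largest $t$-spread monomial of degree $\ell_1$ with $\max(\omega_0)=n$. If $I \in \sS_{t,n,\bm 1}$ has a corner in degree $\ell_1$, then by Characterization \ref{betti teor} there is a unique $u\in G(I)_{\ell_1}$ with $\max(u)=n$; the fact that this generator is unique, together with the $t$-spread strong stability of $I$, forces each non-last variable of $u$ to sit at its minimum admissible position, so that $u=\omega_0$. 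To maximize the number of higher-degree corners one is then led to take $G(I)_{\ell_1}=B_t(\omega_0)_{\ell_1}$.

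I would then recursively define, for $j\ge 1$,
$$\omega_j := \max \Omega_j, \qquad \Omega_j = \bigg\{ u \in M_{n, \ell_1 + j, t}: u \notin \bigcup_{i=0}^{j-1} \shad_t^{j-i}(B_t(\omega_i)),\ \max(u) = n\bigg\},$$
and invoke Lemma \ref{maxbettilemma} to make these explicit. Exactly as in Subsections \ref{sub1}--\ref{sub2}, I expect a finite list of \emph{basic forward monomials} up to some index $j_{\max}$, followed (when the critic monomial exists) by a list of \emph{basic backward monomials} obtained by shifting backwards by $t$ at each step, continuing until $\Omega_j$ becomes empty. The ideal
$$I = B_t\bigl(\omega_0,\omega_1,\ldots,\omega_{j_{\max}+\nu_{\max}+1}\bigr)$$
then realizes the bound, with corners $\bigl(n-t(\ell_1+i-1)-1,\,\ell_1+i\bigr)$, each of Betti number $1$, by Characterization \ref{betti teor}. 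The upper-bound side follows by the same maximality argument as in Theorem \ref{main Teor}: once the length of the corner sequence is to be maximal, the recursion $\omega_j=\max\Omega_j$ is forced step by step.

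The main obstacle is the arithmetic for $j_{\max}$ and $\nu_{\max}$ in the shifted setting. The fixed prefix $x_1 x_{1+t}\cdots x_{1+(\ell_1-2)t}$ in $\omega_0$ occupies the first $1+(\ell_1-2)t$ variable indices, so the recursion effectively moves a shorter ``tail'' inside a smaller range, and the analogous inequalities on the supports must be re-derived. I expect the computation to yield
$$1+j_{\max}+1+\nu_{\max}\;=\;k+\left\lfloor\frac{d-2}{t}\right\rfloor-(\ell_1-2),$$
matching the claimed bound. The upgrade from $\lfloor(d-3)/t\rfloor$ (Theorem \ref{main Teor}) to $\lfloor(d-2)/t\rfloor$ here reflects the different boundary behaviour when $\omega_0$ no longer consists of only two variables; in particular, the verification that $\Omega_{j_{\max}+\nu_{\max}+2}=\emptyset$ in each of the cases $d=1$, $d=2$ and $3\le d\le t$ has to be carried out separately, along the lines of the end of Subsection \ref{sub2}.
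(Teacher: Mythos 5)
Your plan is exactly the paper's: the same seed $\omega_0=x_1x_{1+t}\cdots x_{1+(\ell_1-2)t}x_n$, the same recursion $\omega_j=\max\Omega_j$ driven by Lemma \ref{maxbettilemma}, the same split into forward and backward monomials, and the same conclusion via $I=B_t(\omega_0,\omega_1,\dots)$ together with Characterization \ref{betti teor}. The problem is that you stop precisely where the content begins: the sentence ``I expect the computation to yield $1+j_{\max}+1+\nu_{\max}=k+\lfloor (d-2)/t\rfloor-(\ell_1-2)$'' asserts the theorem rather than proving it. The paper actually carries this out. For $k\ge4$ the forward monomials are
\[
\omega_j=x_1x_{1+t}\cdots x_{1+(\ell_1-3)t}\bigg(\prod_{i=0}^{j-1}x_{2+i+(\ell_1-2+i)t}\bigg)x_{(j+1)+(\ell_1-2+j)t}x_n,
\]
admissible while $(j+1)+(\ell_1-2+j)t\le n-t$, which gives $j_{\max}=\left\lfloor\frac{n-(\ell_1-2)t}{1+t}\right\rfloor-1$; then, with $s=2t-n+j_{\max}(1+t)+1+(\ell_1-2)t$, the backward shifts are possible for those $\nu$ with $j_{\max}-1-s-\nu t\ge0$, yielding $\nu_{\max}=\left\lfloor\frac{d-2}{t}\right\rfloor+k-2-j_{\max}-(\ell_1-2)$, so that $2+j_{\max}+\nu_{\max}$ telescopes to the claimed bound. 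Note in particular that the passage from $\lfloor(d-3)/t\rfloor$ to $\lfloor(d-2)/t\rfloor$ is not a vague ``boundary effect'': it comes from the stopping condition $i_{(j_{\max}+\ell_1-3-s-\nu t)}\ge i_{\ell_1-2}$ (the backward shift may descend to the end of the fixed prefix), which reads $j_{\max}-1-s-\nu t\ge0$ instead of $\ge1$ as in the degree-two case.

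Two further items you gesture at but do not supply are genuinely needed: (i) the case $j_{\max}+\ell_1-3-s<\ell_1-2$, in which the critic monomial does not exist and one must still show $\nu_{\max}=-1$, so that the count $j_{\max}+1$ again equals $k+\lfloor(d-2)/t\rfloor-(\ell_1-2)$; and (ii) the case $k=3$, which the paper disposes of by a separate short enumeration (e.g.\ for $d=1$, $\ell_1=3$ one gets a single corner because $M_{1+3t,4,t}$ is a singleton already contained in $\shad_t(B_t(\omega_0))$). On the positive side, your observation that strong stability plus the uniqueness of the degree-$\ell_1$ generator $u$ with $\max(u)=n$ forces $u=\omega_0$ is a correct justification of the choice of seed that the paper leaves implicit; and, like the paper, you treat the upper-bound direction only by the informal ``the recursion is forced'' argument, so you are no less rigorous there than the source.
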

\begin{proof}
	The proof is very similar to that of Theorem \ref{main Teor}. We prove the existence of a $t$--spread strongly stable ideal $I\in\sS_{t,n,\bm{1}}$ of initial degree $\ell_1$ generated in degrees $\ell_1,\ell_1+1,\dots,k+\left\lfloor\frac{d-2}{t}\right\rfloor-(\ell_1-2)+\ell_1-1=k+\left\lfloor\frac{d-2}{t}\right\rfloor+1$ and such that
	$$
	|\corn(I)|=k+\left\lfloor\frac{d-2}{t}\right\rfloor-(\ell_1-2).
	$$
	
	Firstly, we set $\omega_0=x_1x_{1+t}\cdots x_{1+(\ell_1-2)t}x_{n}$ and $G(I)_2=B_t(\omega_0)$.

	We claim that for all $j=1,2,\dots,k+\left\lfloor\frac{d-2}{t}\right\rfloor+1$, there exist $t$--spread monomials $\omega_1,\omega_2,\dots$, $\omega_{k+\left\lfloor\frac{d-2}{t}\right\rfloor-(\ell_1-2)-1}$ such that 
	$$
	\omega_j=\max_{>_\slex}\Big\{u\in M_{n,j+\ell_1,t}:u\notin \bigcup_{i=0}^{j-1}\shad_t^{j-i}(B_t(\omega_{i}))\ \text{and}\ \max(u)=n\Big\}.
	$$
	
	Set
	$$
	\Omega_j:=\Big\{u\in M_{n,j+\ell_1,t}:u\notin \bigcup_{i=0}^{j-1}\shad_t^{j-i}(B_t(\omega_{i}))\ \text{and}\ \max(u)=n\Big\}.
	$$
	
	Let us consider the case $k=3$.

	If $d=1$, then $n=1+3t$ and $\ell_1=3$. Then, we set $\omega_0=x_1x_{1+t}x_{1+3t}$. Since $M_{1+3t,4,t}=\{x_1x_{1+t}x_{1+2t}x_{1+3t}\}$ and $x_1x_{1+t}x_{1+2t}x_{1+3t}\in\shad_t(B_t(\omega_0))$, we cannot construct $\omega_1$ and, in such a case, a $t$--spread strongly stable ideal with $\ell_1=3$ can have at most one corner. 
	
	If $d\ge2$, then $n=d+3t$.
	
	In particular, if $d=2$, then $3\le\ell_1\le k+\left\lfloor\frac{d-3}{t}\right\rfloor+1=3$. 
	Since $\ell_1=3$, we set $\omega_0=x_1x_{1+t}x_{d+3t}$ and $\omega_1=x_1x_{2+t}x_{2+2t}x_{d+3t}$. Hence, we get two corners.
	
	If $d\ge 3$, then $k+\left\lfloor\frac{d-3}{t}\right\rfloor+1=4$ and $\ell_1\in\{3,4\}$. 
	If $\ell_1=3$, we set $\omega_0=x_1x_{1+t}x_{d+3t}$, and $\omega_1=x_1x_{2+t}x_{2+2t}x_{d+3t}$. Since $|M_{n,5,t}|=\binom{d+4-t}{5}=0$, we cannot construct $\omega_2$ and we can have at most two corners. If $\ell_1=4$, setting $\omega_0=x_1x_{1+t}x_{1+2t}x_{d+3t}$, since $M_{n,5,t}=\emptyset$, we can have at most one corner.
	
	Let $k\ge 4$. For $j\geq 1$, we consider the monomials
	\begin{equation}
	\label{eq2:omega}
	\begin{aligned}
	\omega_{j}&:=x_1x_{1+t}\cdots x_{1+(\ell_1-3)t}\bigg(\prod_{i=0}^{j-1}x_{2+i+(\ell_1-2+i)t}\bigg)x_{(j+1)+(\ell_1-2+j)t}x_{d+kt}
	\\ &=x_1x_{1+t}\cdots x_{1+(\ell_1-3)t}x_{2+(\ell_1-2)t}\cdots x_{(j+1)+(\ell_1-2+j)t}x_{d+kt}.
	\end{aligned}
	\end{equation}
	
	The monomials $\omega_j$ are $t$--spread as long as $j$ is such that $(j+1)+(\ell_1-2+j)t\le n-t$. We determine the greatest such an integer. We have
	$$
	j_{\max}=\max\big\{j:(j+1)+(\ell_1-2+j)t\le n-t\big\}.
	$$
	Proceeding as in the initial degree two case, we have that
	$$
	j_{\max}=\left\lfloor\frac{n-(\ell_1-2)t}{1+t}\right\rfloor-1,
	$$
	and $\omega_j=\max\Omega_j$, for all $j=1,\dots,j_{\max}$. Now, let
	\begin{align*}
	\omega_{j_{\max}}&=x_{i_1}x_{i_2}\cdots x_{i_{(j_{\max}+\ell_1)}}\\
	&=x_1x_{1+t}\cdots x_{1+(\ell_1-3)t}\bigg(\prod_{i=0}^{j_{\max}-1}x_{2+i+(\ell_1-2+i)t}\bigg)x_{(j_{\max}+1)+(\ell_1-2+j_{\max})t}x_{d+kt}.
	\end{align*}
	We have
	$$
	d+kt-[(j_{\max}+1)+(\ell_1-2+j_{\max})t]=n-[(j_{\max}+1)+(\ell_1-2+j_{\max})t]\le 2t
	$$
	and 
	\begin{align*}
	s&=2t-\Big[n-[(j_{\max}+1)+(\ell_1-2+j_{\max})t]\Big]\\&=2t-n+j_{\max}(1+t)+1+(\ell_1-2)t.
	\end{align*}
	Hence, it follows that
	\[i_{(j_{\max}+\ell_1-3-s)}+1=2+j_{\max}-2-s+(\ell_1-2+j_{\max}-2-s)t+1 = d+(k-4-s)t.\]
	
	Now, we distinguish two cases: $j_{\max}+\ell_1-3-s\ge\ell_1-2$, $j_{\max}+\ell_1-3-s<\ell_1-2$.
	
	Let  $j_{\max}+\ell_1-3-s\ge\ell_1-2$. As in Theorem \ref{main Teor},
	\begin{equation}\label{eq3:omega}
	\max \Omega_{j_{\max}+1} = \omega_{j_{\max}+1}=x_{i_1}x_{i_2}\cdots x_{i_{(j_{\max}+\ell_1-4-s)}}\bigg(\prod_{i=k-4-s}^kx_{d+it}\bigg).
	\end{equation}
	Observe that $i_{\ell_1-2}=1+(\ell_1-3)t$ and $i_{\ell_1-1}=2+(\ell_1-2)t$.
	Since
	$$
	i_{m+1}-i_m=\begin{cases}
	t,&\text{for}\ m=1,\dots,\ell_1-3,\\
	t+1,&\text{for}\ m=\ell_1-2,\dots,j_{\max}+\ell_1-5-s,
	\end{cases}
	$$
	then
	$$
	\max \Omega_{j_{\max}+2} = \omega_{j_{\max}+2}=x_{i_1}x_{i_2}\cdots x_{i_{(j_{\max}+\ell_1-4-s-t)}}\bigg(\prod_{i=k-4-s-t-1}^kx_{d+it}\bigg).
	$$
	Finally, we can construct the monomials
	$$
	\omega_{j_{\max}+1+\nu}=x_{i_1}x_{i_2}\cdots x_{i_{(j_{\max}+\ell_1-4-s-\nu t)}}\bigg(\prod_{i=k-4-s-\nu (1+t)}^kx_{d+it}\bigg),
	$$
	as long as $i_{(j_{\max}+\ell_1-3-s-\nu t)}\ge i_{\ell_1-2}$, {\it i.e.} $j_{\max}+\ell_1-3-s-\nu t\ge\ell_1-2$, {\it i.e.} $j_{\max}-1-s-\nu t\ge0$.
	Now, let us determine
	$$
	\nu_{\max}=\max\big\{\nu:j_{\max}-1-s-\nu t\ge0\big\}.
	$$
	If $\nu$ is such that $j_{\max}-1-s-\nu t\ge 0$, then $\nu t\le j_{\max}-1-s$. Hence
	\begin{align*}
	\nu t&\le j_{\max}-1-s=j_{\max}-1-2t+n-j_{\max}(1+t)-1-(\ell_1-2)t
	\\&=j_{\max}-1-2t+d+kt-j_{\max}-j_{\max}t-1-(\ell_1-2)t
	\\&=d-2+\big(k-2-j_{\max}-(\ell_1-2)\big)t.
	\end{align*}
	We have
	\small
	$$
	\nu_{\max}=\left\lfloor\cfrac{d-2+\big(k-2-j_{\max}-(\ell_1-2)\big)t}{t}\right\rfloor=\left\lfloor\frac{d-2}{t}\right\rfloor+k-2-j_{\max}-(\ell_1-2),
	$$
	\normalsize
	and we can construct further \small $\nu_{\max}$ $t$--spread monomials \small $\omega_{j_{\max}+2},\dots,\omega_{j_{\max}+\nu_{\max}+1}$. \normalsize
	
	Finally, we have constructed the 
	$1+j_{\max} +1+\nu_{\max}$ monomials $$\omega_0, \omega_1,\dots,\omega_{j_{\max}}, \omega_{j_{\max}+1}, \omega_{j_{\max}+2},\dots,\omega_{j_{\max}+1+\nu_{\max}}$$ which satisfy our claim.
	Note that
	\begin{align*}
	1+j_{\max}+1+\nu_{\max}&=1+j_{\max}+1+\left\lfloor\frac{d-2}{t}\right\rfloor+k-2-j_{\max}-(\ell_1-2)\\&=k+\left\lfloor\frac{d-2}{t}\right\rfloor-(\ell_1-2).
	\end{align*}
	As in the initial degree two case, $\omega_{j_{\max}+\nu_{\max}+1}$ is the last monomial of the type required in the claim, that we can construct.
	
	Now, suppose $j_{\max}+\ell_1-3-s<\ell_1-2$.\\
	Again, using the same arguments as in in the initial degree two case, one can show that $j_{\max}\ge\left\lfloor\frac{d-2}{t}\right\rfloor+k-1-j_{\max}-(\ell_1-2)$ and $\nu_{\max}\le -1$ and that $\nu_{\max}=-1$. 
	
	Hence, in such a case, we get $j_{\max}+1$ monomials ($\omega_0, \omega_1,\dots,\omega_{j_{\max}}$). Moreover,
	$$
	j_{\max}+1=j_{\max}+1+1+\nu_{\max}=k+\left\lfloor\frac{d-2}{t}\right\rfloor-(\ell_1-2).
	$$
	It is important to underline that in both cases we determine $k+\left\lfloor\frac{d-2}{t}\right\rfloor-(\ell_1-2)$ monomials, also when the critic monomial  $\omega_{j_{\max}+1}$ does not exist.
	
	Setting
	$$
	I=B_t\big(\omega_0,\omega_1,\dots,\omega_{j_{\max}},\omega_{j_{\max}+1},\omega_{j_{\max}+2},\dots,\omega_{j_{\max}+1+\nu_{\max}}\big),
	$$
	the existence of the monomials $\omega_j$, together with Characterization \ref{betti teor}, guarantees that the ideal $I$ is an ideal of $\sS_{t,n,\bm{1}}$ with a corner in degree $\ell_1$ in $S$ and such that
	$$
	|\corn(I)|=k+\left\lfloor\frac{d-2}{t}\right\rfloor-(\ell_1-2).
	$$
\end{proof}\bigskip

The next result (analogous to Theorem~\ref{carcor}) covers the case $t=1$ in \cite{AC6} and the cases $t\geq 2$ in this paper. 

\begin{teor}
	\label{carcor'}
	Let $n=d+kt$ be a positive integer, with $t\ge 1, 1\le d\le t$ and $k\ge 4$. Let $1\le r\le k+\left\lfloor\frac{d-3}{t}\right\rfloor$ be an integer. Given $r$ pairs of positive integers
	\begin{equation}
	\label{eq2'}
	(k_1,\ell_1),\ (k_2,\ell_2),\ \dots,\ (k_{r},\ell_{r}),
	\end{equation}
	with $1\le k_{r}<k_{r-1}<\dots<k_1\le n-t-1$ and $2\le \ell_1<\ell_2<\dots<\ell_{r}\le k+\left\lfloor\frac{d-3}{t}\right\rfloor+1$, then there exists a $t$--spread strongly stable ideal of $S=K[x_1,\dots,x_n]$ of initial degree $\ell_1$ and with the pairs in \textup{(\ref{eq2'})} as corners if and only if $k_j+t(\ell_j-1)+1=n$, for all $j=1,\dots,r$.
\end{teor}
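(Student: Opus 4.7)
The plan is to prove both directions of Theorem \ref{carcor'} by combining Characterization \ref{betti teor} with the explicit monomial construction from the proof of Theorem \ref{generalcase}. The case $\ell_1=2$ specializes to Theorem \ref{carcor}, and the case $t=1$ is the content of \cite{AC6}, so the genuinely new content is $t\ge 2$ with $\ell_1\ge 3$.

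For the sufficiency direction, suppose the diagonal condition $k_j+t(\ell_j-1)+1=n$ holds for every $j$. The proof of Theorem \ref{generalcase} constructs, for initial degree $\ell_1$, a family of $t$-spread monomials $\omega_0,\omega_1,\ldots$ in which $\omega_i$ has degree $\ell_1+i$, satisfies $\max(\omega_i)=n$, and avoids the shadows of its predecessors, $\omega_i\notin\bigcup_{h<i}\shad_t^{i-h}(B_t(\omega_h))$. Since $\ell_r\le k+\lfloor\frac{d-3}{t}\rfloor+1\le k+\lfloor\frac{d-2}{t}\rfloor+1$, every target degree $\ell_j$ lies within the range covered by the construction, so I extract the sub-family indexed by $\ell_1,\ldots,\ell_r$ and set
$$
I=B_t\bigl(\omega_{\ell_1-\ell_1},\ \omega_{\ell_2-\ell_1},\ \ldots,\ \omega_{\ell_r-\ell_1}\bigr).
$$
Applying Characterization \ref{betti teor}: for each $j$ the monomial $\omega_{\ell_j-\ell_1}\in G(I)_{\ell_j}$ has $\max=n$, so $M_{\ell_j}:=\max\{\max(u):u\in G(I)_{\ell_j}\}=n$, and condition (a) gives $k_j=n-t(\ell_j-1)-1$ as required; condition (b) is automatic since $\max(u)\le n<n+t(\ell_{j'}-\ell_j)=k_j+t(\ell_{j'}-1)+1$ for every $\ell_{j'}>\ell_j$. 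The non-shadow property ensures that $\omega_{\ell_j-\ell_1}$ is the unique minimal generator of degree $\ell_j$ attaining $\max=n$, so every corner value equals one and $I\in\sS_{t,n,\bm{1}}$.

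For the necessity direction, let $I$ be a $t$-spread strongly stable ideal of initial degree $\ell_1$ whose corner sequence is exactly the $r$ pairs in \textup{(\ref{eq2'})}. Characterization \ref{betti teor}(a) immediately yields $k_j+t(\ell_j-1)+1=M_{\ell_j}$, so the task reduces to proving $M_{\ell_j}=n$ for every $j$. The main obstacle lies here: in principle, a $t$-spread strongly stable ideal of $S$ could realize corners with $M_\ell<n$ (for instance by supporting generators only on $x_1,\ldots,x_m$ with $m<n$), so the argument must exploit the admissibility constraints $\ell_r\le k+\lfloor\frac{d-3}{t}\rfloor+1$ and $1\le k_r<\cdots<k_1\le n-t-1$ together with the sharp corner count of Theorem \ref{main Teor}. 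My plan is a downward induction on $j$: the base case $j=r$ combines the upper bound on $\ell_r$ with the lower bound $k_r\ge 1$ and the structure of $t$-spread monomials of maximal admissible degree to force $M_{\ell_r}=n$; the inductive step then uses condition (b) of Characterization \ref{betti teor} applied to the consecutive corners $(k_{j-1},\ell_{j-1})$ and $(k_j,\ell_j)$, together with the corner-value-one requirement inherent to $\sS_{t,n,\bm{1}}$, to transfer the equality $M_{\ell_j}=n$ upward to $M_{\ell_{j-1}}=n$.
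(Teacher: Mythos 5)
Your sufficiency argument is correct and is precisely the route the paper intends: Theorem \ref{carcor'} is stated in the paper without an explicit proof, as a consequence of Characterization \ref{betti teor} and the monomial constructions of Theorems \ref{main Teor} and \ref{generalcase}, and your idea of extracting the subfamily $\omega_{\ell_1-\ell_1},\dots,\omega_{\ell_r-\ell_1}$ and taking $I=B_t(\omega_{\ell_1-\ell_1},\dots,\omega_{\ell_r-\ell_1})$ is exactly that. The degree bookkeeping is right ($\ell_r\le k+\lfloor(d-3)/t\rfloor+1\le k+\lfloor(d-2)/t\rfloor+1$ guarantees all the needed $\omega_{\ell_j-\ell_1}$ exist), each $\omega_{\ell_j-\ell_1}$ is the unique minimal generator of degree $\ell_j$ with $\max$ equal to $n$, and condition (b) of Characterization \ref{betti teor} is automatic since every generator has $\max\le n$.

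The necessity half, however, is a plan rather than a proof, and the plan cannot be completed: the key claim you reduce to, namely $M_{\ell_j}:=\max\{\max(u):u\in G(I)_{\ell_j}\}=n$ for all $j$, does not follow from the stated hypotheses. Take $t=2$, $n=11$ (so $d=1$, $k=5$) and $I=B_2(x_1x_9)\subset K[x_1,\dots,x_{11}]$. Then $I\in\sS_{2,11,\bm{1}}$ has initial degree $2$ and the single corner $(6,2)$, and the pair $(6,2)$ satisfies every admissibility constraint of the statement ($r=1\le k+\lfloor(d-3)/t\rfloor=4$, $1\le 6\le n-t-1=8$, $2\le\ell_1\le k+\lfloor(d-3)/t\rfloor+1=5$), yet $k_1+t(\ell_1-1)+1=9\ne 11=n$. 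Your proposed base case also fails quantitatively: even when $\ell_r$ is maximal and $d\le2$, the constraints $k_r\ge1$ and the existence of a $t$--spread monomial of degree $\ell_r$ only force $2+t(\ell_r-1)\le M_{\ell_r}\le n$, a window of width $d+t-2\ge t-1\ge1$, so $M_{\ell_r}$ is not pinned to $n$. In short, the ``only if'' implication needs an additional hypothesis (for instance, that $x_n$ divides some minimal generator of degree $\ell_1$, as is the case for the ideals built in Subsections \ref{sub1}--\ref{sub2}, or that $r$ is maximal together with further normalization); no downward induction on $j$ can close this gap, and you should flag the issue rather than defer it to an induction that cannot be carried out.
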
\medskip

\begin{ese}
	\rm Let $n=138$ and $t=11$, we can write $n=6+12t$. Let $\ell_1=5$. We have:
	\begin{align*}
	j_{\max}&=\left\lfloor\frac{n-(\ell_1-2)t}{1+t}\right\rfloor-1=\left\lfloor\frac{105}{12}\right\rfloor-1=7,\\
	\nu_{\max}&=\left\lfloor\frac{d-2}{t}\right\rfloor+k-2-j_{\max}-(\ell_1-2)=0. 
	\end{align*}
	Firstly, we set $\omega_{0}=x_1x_{1+t}\cdots x_{1+(\ell_1-2)t}x_n=x_1x_{12}x_{23}x_{34}x_{138}$. Then we determine the $j_{\max}=7$ monomials given by (\ref{eq2:omega}).
	More precisely,
	\footnotesize
		\boxalign{\begin{alignat*}{3}
			\omega_1&=x_1x_{12}x_{23}\underline{x_{35}}\bm{x_{46}}x_{138},&&& \\
			\omega_2&=x_1x_{12}x_{23}x_{35}\underline{x_{47}}\bm{x_{58}}x_{138},&&& \omega_5&=x_1x_{12}x_{23}x_{35}x_{47}x_{59}x_{71}\underline{x_{83}}\bm{x_{94}}x_{138},\\
			\omega_3&=x_1x_{12}x_{23}x_{35}x_{47}\underline{x_{59}}\bm{x_{70}}x_{138},&&& \omega_6&=x_1x_{12}x_{23}x_{35}x_{47}x_{59}x_{71}x_{83}\underline{x_{95}}\bm{x_{106}}x_{138},\\
			\omega_4&=x_1x_{12}x_{23}x_{35}x_{47}x_{59}\underline{x_{71}}\bm{x_{82}}x_{138},\ \ &&&\omega_7&=x_1x_{12}x_{23}x_{35}x_{47}x_{59}x_{71}x_{83}x_{95}\underline{x_{107}}\bm{x_{118}}x_{138}.
			\end{alignat*}}
	\normalsize
	Following Theorem~\ref{generalcase}, we consider the integer
	$$
	s=2t-\Big[n-[(j_{\max}+1)+(\ell_1-2+j_{\max})t]\Big]=22-[138-118]=2.
	$$
	Since $j_{\max}+\ell_1-3-s\ge\ell_1-2$, then $\omega_{j_{\max}+1}$ exists. Let $\omega_{j_{\max}}=\omega_7=x_{i_1}x_{i_2}\cdots x_{i_{j_{\max}+\ell_1}}=x_{i_1}x_{i_2}\cdots x_{i_{12}}$, then $i_{(j_{\max}+\ell_1-3-s)}+1=i_{7+5-3-2}+1=i_{7}+1=72$ and $\omega_{j_{\max}+1}=\omega_8$ is given by (\ref{eq3:omega}), \emph{i.e.}
	\begin{align*}
	\omega_8&=x_{i_1}x_{i_2}\cdots x_{i_{(j_{\max}+\ell_1-4-s)}}\bigg(\prod_{i=k-4-s}^kx_{d+it}\bigg)\\
	&=x_1x_{12}x_{23}x_{35}x_{47}x_{59}\boxed{x_{72}}\underline{x_{83}x_{94}x_{105}x_{116}}\bm{x_{127}}x_{138}.
	\end{align*}
	This is the last monomial that we can determine, since $\nu_{\max}=0$. Finally, $I=B_{11}(\omega_0,\dots,\omega_8)$ is the desired $t$--spread strongly stable ideal.
\end{ese}

\section{Conclusions and Perspectives}
\label{sec7}
In this paper, following the approach used in \cite{AC6} and \cite{AC7}, we have discussed the extremal Betti numbers of $t$--spread strongly stable ideals and we have determined the maximal number of admissible corners of a $t$--spread strongly stable ideal given its initial degree.
As in \cite{AC7}, it is important to ``decompose'' the integer $n$ with respect to $t$. In a certain way, we have divided $n$ by $t$ forcing the rest of the division to lie in the set $\{1,\dots,t\}$.
In \cite{AC6}, a numerical characterization of the possible extremal Betti numbers (values as well
as positions) of the class of squarefree strongly stable ideals was given. Theorem \ref{carcor'} characterizes the positions of the extremal Betti numbers of the class of $t$--spread strongly stable ideals in the Betti diagram. Nothing is known about the possible values of the extremal Betti numbers of such a class of ideals. This question is currently under investigation by the authors of this paper.

\end{document}